\tikzset{node distance=2cm, auto}
\tikzstyle{vertex}=[circle, draw, inner sep=0pt, minimum size=6pt]
\numberwithin{equation}{section}
\newtheorem*{theorem*}{Theorem}
\newtheorem*{corollary*}{\bf Corollary}
\newtheorem*{remark*}{\bf Remark}
\newtheorem{theorem}{Theorem}[section]
\newtheorem{corollary}[theorem]{Corollary}
\newtheorem{example}[theorem]{Example}
\newtheorem{lemma}[theorem]{Lemma}
\newtheorem{proposition}[theorem]{Proposition}
\newtheorem{remark}[theorem]{Remark}
\newcommand{\eat}[1]{}
\begin{document}
	\author[A. Nayek]{Arpita Nayek}
	
	\address{SRM University-AP, Neerukonda, Guntur, Andhra Pradesh 522240, India}
	\email{arpita.n@srmap.edu.in}

	\author[A. J. Parameswaran]{A. J. Parameswaran}
	\address{Tata Inst. of Fundamental Research,
		Homi Bhabha Road, Colaba
		Mumbai 400005, India}
	\email{param@math.tifr.res.in}
	
	\author[P. Saha]{Pinakinath Saha}
	\address{Tata Inst. of Fundamental Research,
		Homi Bhabha Road, Colaba
		Mumbai 400005, India}
	\email{psaha@math.tifr.res.in}

	\title{On Automorphism group of a $G$-induced variety}
	\begin{abstract} 	
		Let $G$ be a connected semisimple algebraic group of adjoint type over the field $\mathbb{C}$ of complex numbers and $B$ be a Borel subgroup of $G.$ Let $F$ be an irreducible projective $B$-variety. Then consider the variety $E:=G\times^{B}F,$ which has a natural action of $G$; we call it $G$-induced variety or $(G,B)$-induced variety. In this article, we compute the connected component containing the identity automorphism of the group of all algebraic automorphisms of some particular $G$-induced varieties $E.$ 
	\end{abstract}
	
	\subjclass[2010]{14M15}
	
	\keywords{G-Schubert variety, Tangent sheaf, Automorphism group, Locally rigid}
	\maketitle
	\tableofcontents
	\section{Introduction}
	Let $X$ be a projective variety over complex numbers $\mathbb{C}.$ Let ${\rm Aut}^0(X)$ be the connected component, containing the identity automorphism of the group of all algebraic automorphisms of $X.$	Then ${\rm Aut}^0(X)$ has a structure of an algebraic group (see \cite[Theorem 3.7, p.17]{MO}). Further, the Lie algebra of this automorphism is isomorphic to the space of all tangent vector fields on $X,$ that is the space $H^0(X,\Theta_{X})$ of all global sections of the tangent sheaf $\Theta_{X}$ of $X$ (see \cite[Lemma 3.4, p.13]{MO}).

	Let $G$ be a connected semisimple algebraic group of adjoint type over $\mathbb{C}.$ Demazure \cite{Dem aut} studied automorphism group of a partial flag variety, i.e., a homogeneous variety of the form $G/P,$ where $P$ is a parabolic subgroup of $G.$ Further, Demazure proved that all the higher cohomology groups of the tangent bundle of a partial flag variety vanish. Bott proved this in the complex analytic setup in \cite[Theorem VII, p.242]{Bot}. As a particular case of his result, it follows that the connected component containing the identity automorphism of the group of all algebraic automorphisms of a full flag variety (i.e., a homogeneous variety of the form $G/B$, where $B$ is a Borel subgroup of $G$) is identified with $G.$
	
	By Kodaira-Spencer theory, the vanishing of the first cohomology group of the tangent bundle of a partial flag variety implies that partial flag varieties admit no local deformation of their complex structure. In other words, for any continuous family of complex varieties $X_{y}$ parameterized by a complex variety $Y,$ where $X_{y}$ is topologically isomorphic to $X$ for all $y,$ and $X_{0}$ is analytically isomorphic to $X,$ then $X_{y}$ is analytically isomorphic to $X$ in a neighborhood of $0\in Y.$
	
	Let $B$ be a Borel subgroup of $G.$ Let $F$ be a projective $B$-variety. Consider the variety
	\begin{equation*}
		E:=G\times^{B} F=G\times F/\sim,
	\end{equation*}
	where the action of $B$ on $G\times F$ is given by $b\cdot(g, f)=(gb^{-1}, bf)$ for all $g\in G, b\in B,$ $f\in F$ and $``\sim"$ denote the equivalence relation defined by the action.	The equivalence class of $(g, f)$ is denoted by $[g, f].$ Note that there is a natural action of $G$ on $E$ given by $g'\cdot[g,f]=[g'g,f],$ where $g'\in G, [g,f]\in E.$ Then $E$ is a projective variety together with a $G$-action on it; we call it $(G, B)$-induced variety. Throughout this article we use the terminology $G$-induced variety instead of $(G, B)$-induced variety for the sake of simplicity.

	In this article, we study the connected component containing the identity automorphism of the group of all algebraic automorphisms of some particular $G$-induced varieties.
	
	Let $V$ be a $B$-module. Let $\mathcal{L}(V)$ be the associated homogeneous vector bundle on $G/B$ corresponding to the $B$-module $V.$ We denote the cohomology modules $H^j(G/B, \mathcal{L}(V))$ $(j\ge 0)$  by $H^j(G/B, V)$ $(j\ge 0)$ for short. 
	
	Our main results of this article  are the following.
	
	\begin{theorem}[See Theorem \ref{thm 3.3}]\label{thm 1.1}
		Let $F$ be an irreducible projective $B$-variety. 
		Let $E=G \times^{B} F$ be the $G$-induced variety associated to $F$. Let $\Theta_{E}$ (respectively, $\Theta_{F}$) be the
		tangent sheaf of $E$ (respectively, of $F$).  Then we have
		\begin{itemize}
			\item [(i)] ${\rm Aut}^{0}(E)=G,$ if $H^{0}(G/B, H^{0}(F, \Theta_{F}))=0.$ 
			
			\item [(ii)] Assume that $H^j(F,\mathcal{O}_{F})$ vanish for all $j\ge 1,$ where $\mathcal{O}_{F}$ denotes the structure sheaf on $F.$ Then $H^{1}(E, \Theta_{E})=H^0(G/B, H^{1}(F, \Theta_{F})),$ if $H^j(G/B, H^0(F,\Theta_{F}))=0$ for $j = 1, 2.$   
	\end{itemize}
	\end{theorem}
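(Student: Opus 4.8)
The plan is to exploit the fibre-bundle structure $\pi\colon E=G\times_{B}F\to G/B$, whose fibre is $F$ and whose structure group is $B$ acting on $F$, and to reduce every cohomological statement on $E$ to the cohomology on $G/B$ of homogeneous bundles. I would begin with the relative tangent sequence
\begin{equation*}
0\to \Theta_{E/(G/B)}\to \Theta_{E}\to \pi^{*}\Theta_{G/B}\to 0,
\end{equation*}
and record the higher direct images. Since $\pi$ is a locally trivial fibration, base change identifies $R^{q}\pi_{*}\Theta_{E/(G/B)}\cong \mathcal{L}(H^{q}(F,\Theta_{F}))$ and $R^{q}\pi_{*}\mathcal{O}_{E}\cong \mathcal{L}(H^{q}(F,\mathcal{O}_{F}))$ as homogeneous bundles on $G/B$. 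Because $F$ is irreducible and projective, $H^{0}(F,\mathcal{O}_{F})=\mathbb{C}$ is the trivial $B$-module, so $\pi_{*}\mathcal{O}_{E}=\mathcal{O}_{G/B}$; combined with the hypothesis $H^{q}(F,\mathcal{O}_{F})=0$ for $q\ge 1$ and the projection formula, this gives $R^{0}\pi_{*}\pi^{*}\Theta_{G/B}=\Theta_{G/B}$ and $R^{q}\pi_{*}\pi^{*}\Theta_{G/B}=0$ for $q\ge 1$. The Leray spectral sequence then yields $H^{j}(E,\pi^{*}\Theta_{G/B})=H^{j}(G/B,\Theta_{G/B})$, which by Demazure's theorem equals $\mathfrak{g}=\mathrm{Lie}(G)$ for $j=0$ and vanishes for $j\ge 1$.

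The structural input that drives both parts is the infinitesimal $G$-action: differentiating the $G$-action on $E$ gives a Lie algebra map $\mathfrak{g}\to H^{0}(E,\Theta_{E})$ whose composite with the projection $H^{0}(E,\Theta_{E})\to H^{0}(E,\pi^{*}\Theta_{G/B})=\mathfrak{g}$ is the canonical isomorphism, since the action on $E$ covers the action on $G/B$. Hence $H^{0}(E,\Theta_{E})\to H^{0}(E,\pi^{*}\Theta_{G/B})$ is surjective, so the connecting map $\delta\colon H^{0}(E,\pi^{*}\Theta_{G/B})\to H^{1}(E,\Theta_{E/(G/B)})$ is zero. For part (i), the Leray spectral sequence for $\Theta_{E/(G/B)}$ gives $H^{0}(E,\Theta_{E/(G/B)})=H^{0}(G/B,H^{0}(F,\Theta_{F}))$, which vanishes by hypothesis; the long exact sequence then reads $0\to H^{0}(E,\Theta_{E})\to \mathfrak{g}\to\cdots$, and with the surjectivity above this forces $H^{0}(E,\Theta_{E})\cong\mathfrak{g}$. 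As $G$ is of adjoint type it acts faithfully, so $G\hookrightarrow Aut^{0}(E)$ is an injection of connected algebraic groups with equal dimension, whence $Aut^{0}(E)=G$.

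For part (ii), I would compute $H^{1}(E,\Theta_{E/(G/B)})$ from the spectral sequence $E_{2}^{p,q}=H^{p}(G/B,H^{q}(F,\Theta_{F}))$. The two contributions in total degree one are $H^{1}(G/B,H^{0}(F,\Theta_{F}))$ and $H^{0}(G/B,H^{1}(F,\Theta_{F}))$; the former vanishes by the hypothesis for $j=1$, while the differential $d_{2}\colon H^{0}(G/B,H^{1}(F,\Theta_{F}))\to H^{2}(G/B,H^{0}(F,\Theta_{F}))$ lands in a group that vanishes by the hypothesis for $j=2$. Therefore $H^{1}(E,\Theta_{E/(G/B)})=H^{0}(G/B,H^{1}(F,\Theta_{F}))$. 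Plugging this, the vanishing $H^{1}(E,\pi^{*}\Theta_{G/B})=0$, and $\delta=0$ into the long exact sequence of the relative tangent sequence gives $H^{1}(E,\Theta_{E})\cong H^{1}(E,\Theta_{E/(G/B)})=H^{0}(G/B,H^{1}(F,\Theta_{F}))$.

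The main obstacle I anticipate is justifying the base-change identification $R^{q}\pi_{*}\Theta_{E/(G/B)}\cong \mathcal{L}(H^{q}(F,\Theta_{F}))$ as $G$-equivariant homogeneous bundles, and maintaining the $G$-equivariance throughout so that the vanishing of $\delta$ and of the $d_{2}$-differential are genuine consequences of the stated hypotheses rather than ad hoc. Once the equivariant pushforward formalism is in place, everything else reduces to bookkeeping with the two Leray spectral sequences and Demazure's vanishing theorem.
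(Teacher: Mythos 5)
Your proposal is correct and follows essentially the same route as the paper's own proof: the relative tangent sequence for $\pi\colon E\to G/B$, the identification of the direct images of the relative tangent sheaf (and of $\mathcal{O}_{E}$) with homogeneous bundles on $G/B$, the projection formula together with Demazure's computation of $H^{j}(G/B,\Theta_{G/B})$, the splitting of the $H^{0}$-sequence via the infinitesimal $G$-action (which also kills the connecting map $\delta$), and the degree-one Leray analysis — your $E_{2}$-page argument with the $d_{2}$ differential is exactly the paper's five-term exact sequence. The only cosmetic slip is that you invoke the part (ii) hypothesis $H^{q}(F,\mathcal{O}_{F})=0$ for $q\ge 1$ already in the common setup, whereas part (i) needs only the identification $\pi_{*}\pi^{*}\Theta_{G/B}=\Theta_{G/B}$ (hence only $\pi_{*}\mathcal{O}_{E}=\mathcal{O}_{G/B}$), exactly as in the paper.
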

	The hypotheses of Theorem \ref{thm 1.1}(ii) are  satisfied by a very special class of varieties namely the unirational varieties which also includes flag varieties, Schubert varieties, Bott-Samelson-Demazure-Hansen varieties (see \cite[Lemma 1, p.481]{Ser}). Under this assumption Theorem \ref{thm 1.1}(ii) allows us to compare the local deformation of $E$ and the local deformation of the fibre space $F$ relative to the base space $G/B$.

	Let $T$ be a maximal torus of $G$ and $R$ be the set of roots with respect to $T.$ Let $R^{+}\subset R$ be a set of positive roots. Let $B^{+}$ be the Borel subgroup of $G$ containing $T,$ corresponding to $R^{+}.$ Let $B$ be the Borel subgroup of $G$ opposite to $B^{+}$ determined by $T.$ Let $W=N_{G}(T)/T$ denote the Weyl group of $G$ with respect to $T$, where $N_{G}(T)$ denotes the normalizer of $T$ in $G$. For $w\in W,$ let $X(w):= \overline{BwB/B}$ denote the Schubert variety in $G/B$ corresponding to $w.$ 
	
	Consider the diagonal action of $G$ on $G/B\times G/B.$ Then there is a $G$-equivariant isomorphism $$\xi : G\times^{B} G/B \longrightarrow G/B\times G/B$$ given by $$[g , g'B]\mapsto (gB, gg'B),$$ where $g,g'\in G.$ 
	
	 For any $w\in W,$ $\xi(G\times^{B} X(w))$ is $G$-stable closed irreducible subset of $G/B\times G/B$. Moreover all closed irreducible $G$-stable subsets of $G/B\times G/B$ are precisely of the form $\big\{ \xi(G\times^{B}X(w)): w\in W\big\}$ (see \cite[Definition 2.2.6, p.69-70]{BK}).
	
	For $w\in W,$ let $\mathcal{X}(w):=\xi (G\times^{B}X(w)).$ Then $\mathcal{X}(w)$ is equipped with the structure of a closed subvariety of $G/B\times G/B,$ this $G$-induced variety is called $G$-Schubert variety associated to $w.$ Now onwards we omit $\xi$ and simply write $\mathcal{X}(w)$ to be $G\times^{B} X(w).$ Then
	we prove 
	
	\begin{proposition}[See Proposition \ref{prop 4.3}]\label{prop 1.3}
		Assume that $G$ is simply-laced. Let $w\in W$ be
		such that $w\neq w_{0},$ where $w_{0}$ denotes the longest element of $W.$ Let $\Theta_{\mathcal{X}(w)}$ $($respectively, $\Theta_{X(w)})$ be the tangent sheaf of $\mathcal{X}(w)$ $($respectively, of $X(w)).$ Then we have
		\begin{itemize}
			\item [(i)] ${\rm Aut}^0(\mathcal{X}(w))= G.$
			
			\item [(ii)] $H^1(\mathcal{X}(w),\Theta_{\mathcal{X}(w)})=H^0(G/B, H^1(X(w) ,\Theta_{X(w)})).$
		\end{itemize}
	\end{proposition}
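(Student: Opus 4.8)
The plan is to derive both parts from Theorem \ref{thm 3.3}, applied with $F = X(w)$. The Schubert variety $X(w) = \overline{BwB/B}$ is an irreducible projective variety stable under $B$, so it is an admissible choice of $F$ and $\mathcal{X}(w) = G \times_B X(w)$ is precisely the associated $G$-twisted variety. The extra hypothesis of part (ii) of Theorem \ref{thm 3.3}, namely $H^j(X(w), \mathcal{O}_{X(w)}) = 0$ for all $j \ge 1$, is the classical vanishing for Schubert varieties; I would quote it (it follows, for instance, from a Bott--Samelson resolution $\psi \colon Z \to X(w)$ with $\psi_\ast \mathcal{O}_Z = \mathcal{O}_{X(w)}$, $R^i\psi_\ast \mathcal{O}_Z = 0$ for $i>0$, and the vanishing of the higher cohomology of $\mathcal{O}_Z$ on the iterated $\mathbb{P}^1$-bundle $Z$). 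Granting this, Theorem \ref{thm 3.3} reduces the whole proposition to a single cohomological statement.

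Write $M := H^0(X(w), \Theta_{X(w)})$ for the $B$-module of global vector fields. Part (i) of Theorem \ref{thm 3.3} gives $Aut^0(\mathcal{X}(w)) = G$ once $H^0(G/B, M) = 0$, and part (ii) gives the asserted formula for $H^1(\mathcal{X}(w), \Theta_{\mathcal{X}(w)})$ once $H^1(G/B, M) = H^2(G/B, M) = 0$. Hence everything follows from the claim
\begin{equation*}
	H^j(G/B, M) = 0 \qquad (j = 0, 1, 2).
\end{equation*}

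To prove the claim I would use the description of $M$ obtained earlier in the section: $M = \mathrm{Lie}(Aut^0(X(w)))$ is, for $P := \mathrm{Stab}_G(X(w))$ the (parabolic) stabilizer of $X(w)$, a $P$-module, and as a module over the Levi $L$ of $P$ it is the adjoint module of the semisimple part of $L$ — a direct sum of nontrivial irreducible $L$-modules whose highest weights are positive roots of $G$, with no trivial summand. Because $w \neq w_0$ the parabolic $P$ is proper, so these roots lie in a proper subsystem and in particular none of them is the highest root $\theta$. I would then run the smooth fibration $\pi \colon G/B \to G/P$, whose fibres are the flag variety $L/(B \cap L)$: since the fibres are flag varieties one has $\pi_\ast \mathcal{O}_{G/B} = \mathcal{O}_{G/P}$ and $R^{>0}\pi_\ast \mathcal{O}_{G/B} = 0$, while $M$ being a $P$-module gives $\mathcal{L}(M) = \pi^\ast \mathcal{L}_{G/P}(M)$; the projection formula then yields $H^j(G/B, M) \cong H^j(G/P, \mathcal{L}_{G/P}(M))$ for all $j$. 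Finally I would apply the Bott--Borel--Weil theorem on $G/P$ to each irreducible constituent $V_\lambda$ of $M$: here $\lambda$ is a positive root different from $\theta$, so (using that $G$ is simply-laced) $\lambda$ is non-dominant with $\langle \lambda, \alpha_i^\vee \rangle = -1$ for some simple $\alpha_i$, whence $\langle \lambda + \rho, \alpha_i^\vee \rangle = 0$, the weight $\lambda + \rho$ is singular, and $\mathcal{L}_{G/P}(V_\lambda)$ is acyclic. Therefore $H^j(G/P, \mathcal{L}_{G/P}(M)) = 0$ for every $j$, which proves the claim.

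The real obstacle is not this cohomological reduction but the structural input it rests on, namely the precise determination of $M = H^0(X(w), \Theta_{X(w)})$ — this is exactly the content of the preceding results of the section, and it is where both hypotheses become indispensable. Two features of $M$ are crucial and must be established: that $M$ has no trivial direct summand (a single copy of the trivial module would give $H^0(G/B, \mathcal{L}(0)) = \mathbb{C} \neq 0$ and would already wreck part (i), so one cannot argue weight-by-weight and must keep the $L$-module constituents intact), and that the highest weights of its constituents avoid $\theta$. The hypothesis $w \neq w_0$ is what keeps $P$ proper, thereby excluding $\theta$ and forbidding a central (trivial) summand; the simply-laced hypothesis is what forces $\lambda + \rho$ to be singular for every non-highest positive root $\lambda$, a step that genuinely fails in the non-simply-laced case, where $\langle \lambda, \alpha_i^\vee\rangle$ may equal $-2$ or $-3$ and $\lambda + \rho$ can be regular. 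The boundary case $w = w_0$ confirms the sharpness: then $X(w_0) = G/B$, $P = G$, $M = \mathfrak{g} \supseteq \mathfrak{g}_\theta$, and $H^0(G/B, \mathcal{L}(\theta)) \cong \mathfrak{g} \neq 0$, so both the method and the conclusion collapse.
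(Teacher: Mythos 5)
Your reduction to cohomology on $G/B$ is exactly the paper's: apply Theorem \ref{thm 3.3} with $F=X(w)$, quote $H^j(X(w),\mathcal{O}_{X(w)})=0$ for $j\ge 1$, and reduce both assertions to the vanishing of $H^j(G/B,M)$ for $j=0,1,2$, where $M=H^0(X(w),\Theta_{X(w)})$; this vanishing is the paper's Lemma \ref{lemma 4.2}. The gap is in your proof of that vanishing: your structural description of $M$ is false. What is actually true (Lemma \ref{lemma 4.1}, i.e., Kannan's surjectivity theorem) is that $M$ is a quotient of the \emph{full} parabolic subalgebra $\mathfrak{p}_{I(w)}$, where $P_{I(w)}=\mathrm{Stab}_G(X(w))$, and this quotient map often has trivial kernel, so that $M=\mathfrak{p}_{I(w)}$ itself. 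Concretely, in type $A_2$ take $w=s_1s_2$: then $X(w)$ is the blow-up of $\mathbb{P}^2$ at a point, $Aut^0(X(w))$ is the full six-dimensional parabolic $P_{I(w)}$ (the stabilizer in $PGL_3$ of the blown-up point), and $M=\mathfrak{p}_{I(w)}$ contains the whole Cartan subalgebra and the nilradical. As an $L$-module it therefore has a trivial summand (the center of the Levi) and a nilradical constituent whose highest weight is the negative of a simple root --- both features your argument declares impossible. Your description of $M$ is correct only in the special case $w=w_{0,J}$, when $X(w)$ is itself a flag variety $P_J/B$.

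This is not a repairable slip within your method, because with the correct constituents a constituent-by-constituent Bott--Borel--Weil argument does not give vanishing. In the $A_2$ example the trivial constituent contributes $H^0=\mathbb{C}$ and the nilradical constituent contributes $H^1=\mathbb{C}$ (by Lemma \ref{lemma 2.2}(3), since $s_{\alpha_2}\cdot(-\alpha_2)=0$), while the adjoint-of-Levi constituent is acyclic; the true vanishing of $H^j(G/B,\mathcal{L}(\mathfrak{p}_{I(w)}))$ holds only because the connecting homomorphism of the non-split $B$-module filtration identifies these two groups. So one must exploit the extension structure of $M$, which a ``direct sum of acyclic irreducibles'' scheme discards. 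The paper's Lemma \ref{lemma 4.2} is engineered for exactly this: it runs the exact sequence $0\to\mathcal{K}_w\to\mathfrak{p}_{I(w)}\to M\to 0$ the other way, quotes Kannan's vanishing $H^j(G/B,\mathfrak{p}_{I(w)})=0$ for all $j\ge 0$ (this is where $w\ne w_0$ enters), deduces $H^j(G/B,M)\cong H^{j+1}(G/B,\mathcal{K}_w)$, and then kills the latter by an induction along a reduced expression using the classification of indecomposable $B_\alpha$-modules (Lemma \ref{lemma 2.4}), Lemma \ref{lemma 2.3}, and finally Fact 2 together with the Manivel--Snow lemma. Your closing remark that a trivial summand ``would wreck part (i)'' shows you sensed the danger, but the correct resolution is not that trivial constituents are absent --- they are present --- it is that they never split off, and the cancellation they undergo is precisely what has to be proved.
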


	Thus if $G$ is simply-laced and $w\neq w_{0}\in W,$ then by Proposition \ref{prop 1.3}, we conclude that $\mathcal{X}(w)$ admits no local deformation whenever $X(w)$ does so.
	
	Let $w=s_{i_1}s_{i_2}\cdots s_{i_r}$ be a reduced expression and let $\underline{
		i}:=(i_1, \ldots, i_r).$ Let $Z(w,\underline{
		i})$ be the Bott-Samelson-Demazure-Hansen variety (natural desingularization of $X(w)$) associated to $(w, \underline{i}).$ It was first introduced by Bott and Samelson in a differential geometric and topological context (see \cite{BS}). Demazure \cite{Dem1} and Hansen \cite{Han} independently adapted the construction in algebro-geometric situation, which explains the reason
	for the name. For the sake of simplicity, we write BSDH-variety instead of Bott-Samelson-Demazure-Hansen variety.
	
	There is a natural left action of $B$ on $Z(w,\underline{i}).$ Let $\mathcal{Z}(w, \underline{i})= G\times^{B}Z(w, \underline{i}).$ Then the
	$G$-induced variety $\mathcal{Z}(w,\underline{i})$ is a smooth projective variety and it is a natural desingularization of $\mathcal{X}(w)$ (see \cite[Corollary 2.2.7, p.70]{BK}), we call it $G$-Bott-Samelson-Demazure-Hansen variety ($G$-BSDH-variety for short). Then we prove 
	
	\begin{proposition}[See Proposition \ref{prop 4.9}]\label{prop 1.5}
		Assume that $G$ is simply-laced and the rank of $G$ is at least two. Let $\Theta_{\mathcal{Z}(w,\underline{i})}$ be the
		tangent sheaf on $\mathcal{Z}(w, \underline{i}).$ Then we have 
		\begin{itemize}
			\item [(i)] ${\rm Aut}^0(\mathcal{Z}(w, \underline{i}))=G.$
			
			\item [(ii)] $H^{j}(\mathcal{Z}(w, \underline{i}), \Theta_{\mathcal{Z}(w,\underline{i})})=0$ for $j\ge 1.$
		\end{itemize}	
	\end{proposition}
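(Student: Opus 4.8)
The plan is to present $\mathcal{Z}(w,\underline{i})$ as the $G$-twisted variety $E=G\times_{B}F$ with $F=Z(w,\underline{i})$ and to run the fibration analysis behind Theorem \ref{thm 3.3}, but carried out in every cohomological degree. Write $\pi\colon \mathcal{Z}(w,\underline{i})\to G/B$ for the projection; its fibres are copies of $Z:=Z(w,\underline{i})$, so the relative (vertical) tangent sheaf $\Theta_{\pi}$ restricts to $\Theta_{Z}$ on each fibre and one has the short exact sequence
\[
0\to\Theta_{\pi}\to\Theta_{\mathcal{Z}(w,\underline{i})}\to\pi^{*}\Theta_{G/B}\to 0 .
\]
First I would record the homogeneous-bundle identifications $R^{q}\pi_{*}\Theta_{\pi}=\mathcal{L}\big(H^{q}(Z,\Theta_{Z})\big)$ (the bundle on $G/B$ associated to the $B$-module $H^{q}(Z,\Theta_{Z})$) and, via the projection formula, $R^{q}\pi_{*}(\pi^{*}\Theta_{G/B})=\Theta_{G/B}\otimes\mathcal{L}\big(H^{q}(Z,\mathcal{O}_{Z})\big)$, both standard for the fibre bundle associated to the principal $B$-bundle $G\to G/B$.

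The argument then rests on three cohomological inputs about the smooth projective variety $Z$. (1) Since $Z$ is an iterated $\mathbb{P}^{1}$-bundle, $H^{q}(Z,\mathcal{O}_{Z})=0$ for $q\ge1$ and $H^{0}(Z,\mathcal{O}_{Z})=\mathbb{C}$; combined with the projection formula and Demazure's vanishing $H^{j}(G/B,\Theta_{G/B})=0$ $(j\ge1)$, $H^{0}(G/B,\Theta_{G/B})=\mathfrak{g}$ (here $G$ adjoint is used), this yields $H^{j}(\mathcal{Z}(w,\underline{i}),\pi^{*}\Theta_{G/B})=H^{j}(G/B,\Theta_{G/B})$ for all $j$. (2) In the simply-laced case $H^{q}(Z,\Theta_{Z})=0$ for $q\ge1$, which I would obtain by induction on $r$ through the $\mathbb{P}^{1}$-bundle $Z(w,\underline{i})\to Z(s_{i_{1}}\cdots s_{i_{r-1}},(i_{1},\dots,i_{r-1}))$, analysing its relative tangent sequence and the rank-three direct image of the relative tangent bundle. (3) Most importantly, $H^{p}(G/B,H^{0}(Z,\Theta_{Z}))=0$ for every $p\ge0$.

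Granting (1)--(3), the Leray spectral sequence $E_{2}^{p,q}=H^{p}(G/B,H^{q}(Z,\Theta_{Z}))$ for $\pi$ applied to $\Theta_{\pi}$ collapses by (2) to the single row $E_{2}^{p,0}=H^{p}(G/B,H^{0}(Z,\Theta_{Z}))$, which vanishes for all $p$ by (3); hence $H^{k}(\mathcal{Z}(w,\underline{i}),\Theta_{\pi})=0$ for all $k$. Feeding this and (1) into the long exact sequence of the displayed short exact sequence gives $H^{k}(\mathcal{Z}(w,\underline{i}),\Theta_{\mathcal{Z}(w,\underline{i})})\cong H^{k}(G/B,\Theta_{G/B})$ for every $k$, which equals $\mathfrak{g}$ in degree $0$ and $0$ in positive degrees. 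This is exactly (ii); and since the $G$-action gives an inclusion $\mathfrak{g}=\mathrm{Lie}(G)\hookrightarrow H^{0}(\mathcal{Z}(w,\underline{i}),\Theta_{\mathcal{Z}(w,\underline{i})})$ which is now an equality of Lie algebras, connectedness forces $Aut^{0}(\mathcal{Z}(w,\underline{i}))=G$, giving (i). Alternatively, (i) follows directly from Theorem \ref{thm 3.3}(i) using only the degree-$0$ case of (3).

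The main obstacle is (3), with the simply-laced hypothesis also feeding (2). The point is to control the $B$-module $H^{0}(Z,\Theta_{Z})$ precisely enough to invoke Bott's theorem: its $T$-weights are assembled from the simple roots $\alpha_{i_{1}},\dots,\alpha_{i_{r}}$, and in the simply-laced case adjacent simple roots pair to $-1$, so each relevant weight $\mu$ has $\mu+\rho$ on a wall and therefore contributes $0$ in \emph{every} degree of $H^{\bullet}(G/B,\mathcal{L}(\mu))$; the long exact sequences attached to a $B$-stable filtration of $H^{0}(Z,\Theta_{Z})$ then give (3). The necessity of such cancellations is already visible for $G/B$ itself, where $H^{\bullet}(G/B,\mathcal{L}(\mathfrak{b}))=0$ because the constant-section map $\mathfrak{g}\to H^{0}(G/B,\Theta_{G/B})$ is an isomorphism; this is the prototype governing (3). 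Accordingly I would isolate (2) and (3) as the technical lemmas of the section, proving them by the inductive $\mathbb{P}^{1}$-tower together with the Bott-theoretic weight computation, or by citing the corresponding results on the tangent bundle of Bott--Samelson--Demazure--Hansen varieties in the simply-laced case.
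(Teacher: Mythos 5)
Your fibration skeleton is exactly the paper's: part (i) comes from Theorem \ref{thm 3.3}(i), and part (ii) from the sequence $0\to\mathcal{R}\to\Theta_{\mathcal{Z}(w,\underline{i})}\to\pi^{*}\Theta_{G/B}\to 0$, the projection formula, Demazure's vanishing on $G/B$, and the degenerate Leray spectral sequence; your inputs (1), (2), (3) are respectively the paper's \eqref{eq3.6}--\eqref{eq3.7}, Lemma \ref{lemma 4.6} (quoted from \cite{CKP}), and Lemma \ref{lemma 4.8}. The problem is your proposed proof of (3), which is the genuinely new content here --- there is no result in the BSDH literature you could cite for it, so the hedge ``or by citing'' is not available. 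Your stated mechanism, that every $T$-weight $\mu$ of $H^{0}(Z,\Theta_{Z})$ has $\mu+\rho$ singular so that each step of a $B$-stable filtration contributes zero in every degree, is false. By \cite[Theorem 7.1(4)]{CKP}, $H^{0}(Z,\Theta_{Z})$ is a quotient of $\mathfrak{p}_{J(w,\underline{i})}$ and contains zero weights and negative simple roots: already for $Z=P_{\alpha}/B\simeq\mathbb{P}^{1}$ one has $H^{0}(Z,\Theta_{Z})\simeq\mathfrak{sl}_{2}$ with weights $\alpha,0,-\alpha$. For these, Bott gives $H^{0}(G/B,\mathcal{L}(0))=\mathbb{C}\neq 0$ and $H^{1}(G/B,\mathcal{L}(-\alpha))=\mathbb{C}\neq 0$ (the weight $-\alpha$ is regular of index $1$ after the $\rho$-shift, cf.\ Fact 2 of Section 4), so a term-by-term filtration argument cannot yield vanishing in all degrees. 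Your own prototype already refutes the mechanism you propose: $H^{\bullet}(G/B,\mathcal{L}(\mathfrak{b}))=0$ holds not because each weight of $\mathfrak{b}$ is Bott-trivial, but because the $H^{0}$'s coming from the $n$ zero weights of $\mathfrak{h}$ cancel against the $H^{1}$'s coming from the $n$ negative simple roots --- a cancellation invisible to the weight-by-weight argument.

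What is actually required, and what the paper does in Lemma \ref{lemma 4.8} (modelled on Lemma \ref{lemma 4.2}), is a way to organize this cancellation. One presents $H^{0}(Z(w,\underline{i}),\Theta_{Z(w,\underline{i})})$ as $\mathfrak{p}_{J(w,\underline{i})}/\mathcal{K}_{w,\underline{i}}$, uses Kannan's vanishing $H^{j}(G/B,\mathfrak{p}_{J(w,\underline{i})})=0$ for all $j\geq 0$ (valid because $J(w,\underline{i})\neq S$, see \cite{Kan}) to shift the problem to showing $H^{j+1}(G/B,\mathcal{K}_{w,\underline{i}})=0$, and then kills the dangerous weights inside $\mathcal{K}_{w,\underline{i}}$ by induction along minimal-parabolic fibrations: whenever $-\alpha$ is a weight of $\mathcal{K}_{w,\underline{i}}$, the pair $\mathbb{C}h(\alpha)\oplus\mathbb{C}_{-\alpha}$ lies in $\mathcal{K}_{w,\underline{i}}$ and, by the classification of indecomposable $B_{\alpha}$-modules (Lemma \ref{lemma 2.4}, from \cite{BKS}), is isomorphic to $V\otimes\mathbb{C}_{-\omega_{\alpha}}$ with $V$ the standard two-dimensional $\widetilde{L}_{\alpha}$-module, whose $L_{\alpha}/B_{\alpha}$-cohomology vanishes in all degrees by Lemma \ref{lemma 2.3}(4). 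Only after these pairs are removed do the surviving weights become Bott-trivial (Fact 2 together with \cite{MS}), and this is precisely where the simply-laced hypothesis enters. So the gap is concrete: you have correctly reduced the proposition to statement (3), but the argument offered for (3) asserts weight-by-weight vanishing that fails for the zero and negative-simple-root weights, and it contains no substitute for the kernel-shift and pairing mechanism that makes the paper's proof work.
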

	
	By Proposition \ref{prop 1.5}(ii), $H^2(\mathcal{Z}(w, \underline{i}), \Theta_{\mathcal{Z}(w,i)})=0.$ Hence by \cite[p.273]{Huy}, we conclude that $\mathcal{Z}(w, \underline{i})$ has unobstructed deformation for a simply-laced group $G.$ 
	
	Further, by Proposition \ref{prop 1.5}(ii), $H^1(\mathcal{Z}(w, \underline{
		i}), \Theta_{\mathcal{Z}(w,\underline{
			i})})= 0.$ Hence by \cite[Proposition 6.2.10, p.272]{Huy}, we conclude that $G$-BSDH-varieties are locally rigid for simply-laced group $G.$
	
	It should be mentioned here that if $G$ is not simply-laced, then $H^1(\mathcal{Z}(w, \underline{i}), \Theta_{\mathcal{Z}(w,\underline{i})})$
	might be non-zero (see Example \ref{ex4.12}).
	
	In the view of the above results the following questions are open:

	{\bf Open problems:} 
	\begin{itemize}
		\item[(1)] Assume that $G$ is not simply-laced. What is the connected component containing the identity automorphism of the group of all algebraic automorphisms of $\mathcal{X}(w)$ or $\mathcal{Z}(w, \underline{i})?$
		
		\vspace{.1cm}
		\item [(2)] What is the group of all algebraic automorphisms of $\mathcal{X}(w)$ or $\mathcal{Z}(w, \underline{i})?$
	\end{itemize}
	
	The organization of the paper is as follows. In Section 2, we set up notation and recall some preliminaries. In Section 3, we prove Theorem \ref{thm 1.1}. In Section 4, we prove Proposition \ref{prop 1.3} and Proposition \ref{prop 1.5}.
	
 {\it Acknowledgement.} We are grateful to the referee for his/her careful reading and numerous valuable remarks and comments. We are also thankful to Professor Michel Brion for his encouragement and useful comments through e-mail exchanges on a preliminary version of this paper.
 The first named author would like to thank Chennai Mathematical Institute for the postdoctoral fellowship and the hospitality during her stay. The second and third named authors would like to thank Department of Atomic Energy, Government of India [project no. 12-R\&D-TFR-5.01-0500] for the funding. Third named author acknowledges Tata Institute of Fundamental Research for the postdoctoral position and the hospitality during his stay.
	
	\section{Notation and Preliminaries} \label{sec2}
	In this section, we set up some notation and preliminaries. We refer to \cite{BK}, \cite{Hum1}, \cite{Hum2}, \cite{Jan} for preliminaries in algebraic groups and Lie algebras. 
	
	Let $G,$ $T,$ $B,$ $R,$ $R^{+},$ and $W$ be as in the introduction. Let $S=\{\alpha_{1},\ldots, \alpha_{n}\}$ denote the set of simple roots in $R^{+},$ where $n$ is the rank of $G.$ For $\beta \in R^{+},$ we use the notation $\beta>0.$ Let $W=N_{G}(T)/T$ denote the Weyl group of $G$ with respect to $T.$ The simple reflection
	in $W$ corresponding to $\alpha_{i}$ is denoted by $s_{i}$. For $w\in W,$ let $\ell(w)$ denote the length of $w.$ The Bruhat-Chevalley order $``\le"$ on $W$ is defined as for $v,w\in W,$ $v\le w$ if and only if  $X(v)\subseteq X(w).$

	For a subset $J\subset S,$ let $W_{J}$ be the subgroup of $W$ generated by $\{s_{\alpha}: \alpha \in J\}.$ For a subset $J\subseteq S,$ let $P_{J}$ be the standard parabolic subgroup of $G,$ i.e., $P_{J}$ is generated by $B$ and $n_{w},$ where $w\in W_{J}$ and $n_{w}$ is a representative of $w$ in $G.$ The subgroup $W_{J}\subseteq W$ is called Weyl group of $P_{J}.$ For a simple root $\alpha_{i},$ we denote the corresponding parabolic subgroup simply by $P_{\alpha_{i}},$ it is called minimal parabolic subgroup corresponding to $\alpha_{i}.$
	
	Let $\mathfrak{g}$ be the Lie algebra of $G.$ Let $\mathfrak{t}\subset \mathfrak{g}$ be the Lie algebra of $T$ and $\mathfrak{b}\subset \mathfrak{g}$ be the Lie algebra of $B.$ 
	Let $X(T)$ denote the group of all characters of $T.$ We have $X(T)\otimes_{\mathbb{Z}} \mathbb{R}={\rm Hom}_{\mathbb{R}}(\mathfrak{t}_{\mathbb{R}}, \mathbb{R}),$ the dual of the real form of $\mathfrak{t}.$ The positive definite $W$-invariant form on ${\rm Hom}_{\mathbb{R}}(\mathfrak{t}_{\mathbb{R}}, \mathbb{R})$ induced by the Killing form of $\mathfrak{g}$ is denoted by $(- , -).$ We use the notation $\langle -,- \rangle,$ to denote $\langle \mu, \alpha\rangle=\frac{2(\mu, \alpha)}{(\alpha, \alpha)}$ for every $\mu\in X(T)\otimes_{\mathbb{Z}} \mathbb{R}$ and $\alpha \in R.$ 
	The pairing $\langle \mu, \alpha \rangle$ is usually called the Cartan pairing of $\mu$ and $\alpha$. 
	We denote by $X(T)^{+}$ the set of dominant characters of $T$ with respect to $B^{+}.$ Let $\rho$ denote
	the half sum of all positive roots of $G$ with respect to $T$ and $B^{+}.$ For any simple root $\alpha_{i},$ we denote the fundamental weight corresponding to $\alpha_{i}$ by $\omega_{i}.$ For $1\le i \le n,$ let $h(\alpha_{i})\in \mathfrak{t}$ be the fundamental co-weight corresponding to $\alpha_{i}.$ That is $\alpha_{i}(h(\alpha_{j})) = \delta_{ij},$ where $\delta_{ij}$ is Kronecker delta.
	
	We recall that the BSDH-variety corresponding to a reduced expression $\underline{i}= (i_1, i_2, \ldots, i_r)$ of $w =s_{i_{1}}s_{i_{2}}\cdots s_{i_{r}}$ is  defined as the quotient

	\[\displaystyle
	Z(w,\underline{i})=\frac{P_{\alpha_{i_{1}}}\times P_{\alpha_{i_{2}}}\times \cdots \times  P_{\alpha_{i_{r}}}}{B\times B\times \cdots \times B},\]
	where the $r$-fold product $B\times B \times \cdots \times B$ acts on $P_{\alpha_{i_{1}}}\times  P_{\alpha_{i_{2}}}\times \cdots \times P_{\alpha_{i_{r}}}$ on the right via \[\displaystyle (p_1, p_2,\ldots , p_r)\cdot(b_1, b_2, \ldots , b_r) = (p_1\cdot b_1, b_{1}^{-1}\cdot p_2 \cdot b_2, \ldots , b^{-1}_{r-1}\cdot p_r\cdot b_r),~ p_j\in P_{\alpha_{i_{j}}}, b_j \in B\]  (see \cite[Definition 1, p.73]{Dem1}, \cite[Definition 2.2.1, p.64]{BK}). The equivalence class of $(p_1,\ldots,p_{r})$ is denoted by $[p_1,\ldots,p_r].$ There is a natural action of $P_{\alpha_{i_{1}}}$ on $Z(w,\underline{i})$ given by the left multiplication as $p\cdot[p_1,\ldots ,p_r]=[pp_1,\ldots,p_r], ~p_j\in P_{\alpha_{i_{j}}}, p\in P_{\alpha_{i_{1}}}.$ In particular there is a natural left action of $B$ on $Z(w,\underline{i})$.

	Note that $Z(w, \underline{i})$ is a smooth projective variety. The BSDH-varieties are equipped with a $B$-equivariant morphism
	\begin{equation*}
		\phi_{w}: Z(w,\underline{i})\longrightarrow G/B
	\end{equation*}
	defined by $$[p_1,...,p_r]\mapsto p_1\cdots p_rB.$$ Then $\phi_{w}$ is the natural birational surjective morphism from $Z(w, \underline{i})$ to $X(w).$

	Let $f_{r}:Z(w, \underline{i})\longrightarrow Z(ws_{i_{r}}, \underline{i}')$ denote the map induced by the projection
	\[\displaystyle P_{\alpha_{i_{1}}}\times  P_{\alpha_{i_{2}}}\times \cdots \times P_{\alpha_{i_{r}}}\longrightarrow P_{\alpha_{i_{1}}}\times P_{\alpha_{i_{2}}}\times \cdots \times P_{\alpha_{i_{r-1}}}\] where $\underline{i}'= (i_1, i_2, \ldots, i_{r-1}).$ Then we observe that $f_r$ is a $P_{\alpha_{i_{r}}}/B \simeq \mathbb{P}^{1}$-fibration.
	
	For a $B$-module $V,$ let $\mathcal{L}(w,V)$ denote the restriction of the associated homogeneous vector bundle $\mathcal{L}(V)$ on $G/B$ to $X(w).$ By abuse of notation, we denote the pull back of $\mathcal{L}(w, V)$ via $\phi_{w}$ to $Z(w, \underline{i})$ also by $\mathcal{L}(w,V),$ when there is no confusion. Since for any $B$-module $V$ the vector bundle $\mathcal{L}(w,V)$ on $Z(w, \underline{i})$ is the pull back of the homogeneous vector bundle from $X(w),$ we conclude that the cohomology modules $H^j(Z(w, \underline{i}),\mathcal{L}(w,V))\simeq H^j(X(w),\mathcal{L}(w,V))$ for all $j\ge 0$ (see \cite[Theorem 3.3.4(b)]{BK}), are independent of
	choice of reduced expression $\underline{i}.$ Hence we denote $H^j(Z(w,\underline{i}),\mathcal{L}(w,V))$ by $H^j(w,V).$ In particular, if $\lambda$ is character of $B,$ then we denote the cohomology modules $H^j(Z(w, \underline{i}),\mathcal{L}(w,\lambda))$ by $H^j(w,\lambda),$ where $\mathcal{L}(w,\lambda)=\mathcal{L}(w, \mathbb{C}_{\lambda})$ and $\mathbb{C}_{\lambda}$ denotes
	the one-dimensional $B$-module associated to $\lambda.$

We use the following ascending 1-step construction of the BSDH variety as a basic tool for computing cohomology modules. Let $\alpha$ be a simple root such that $\ell(w) = \ell(s_{\alpha} w)+1.$ Let $Z(w,\underline{i})$ be a BSDH-variety corresponding to a reduced expression $w=s_{i_{1}}s_{i_{2}}\cdots s_{i_{r}}$, where $\alpha_{i_1}=\alpha.$ Then we have a natural first factor projection morphism \[p: Z(w,\underline{i})\to P_{\alpha}/B\] with fibres $Z(s_{\alpha}w,\underline{i'}),$ where $\underline{i'}= (i_2, i_ 3,\ldots,i_r).$ Note that $p$ is $P_{\alpha}$-equivariant.
By an application of the Leray spectral sequence together with the fact that the base is $\mathbb{P}^{1}$, for every $B$-module $V$ and $j\ge 0$, we obtain the following short exact sequence of $P_{\alpha}$-modules:
\[0\to H^{1}(s_{\alpha}, R^{j-1}p_{*}\mathcal{L}(w, V))\to  H^j(w,V)\to H^0(s_{\alpha}, R^{j}p_{*}\mathcal{L}(w, V))\to 0. \hspace{.3cm} \text{(SES)}\]
Moreover by \cite[II, p.366]{Jan} we have the following isomorphism \[R^{j}{f_{r}}_{*}\mathcal{L}(w, V)\simeq \mathcal{L}(ws_{i_{r}}, H^j(s_{i_{r}}, V))~ (j\ge 0) \]
\[R^{j}p_{*}\mathcal{L}(w, V)\simeq \mathcal{L}(s_{\alpha}, H^j(s_{\alpha}w, V))~ (j\ge 0) \] of $B$-linearized sheaves.
	
	 Here, we recall the following result due to Demazure \cite[p.271]{Dem2} on short exact sequence of $B$-modules:
	
	\begin{lemma} \label{lemma 2.1}
		Let $\alpha$ be a simple root and $\lambda\in X(T)$ be such that $\langle \lambda , \alpha \rangle \ge 0.$ Let $ev:H^0(s_{\alpha}, \lambda)\longrightarrow \mathbb{C}_{\lambda}$ be the evaluation map. Then we have 
		\begin{enumerate}
			\item[(1)] If $\langle \lambda, \alpha \rangle =0,$ then $H^0(s_{\alpha}, \lambda)\simeq \mathbb{C}_{\lambda}.$
			\item[(2)] If $\langle \lambda , \alpha \rangle \ge 1,$ then $\mathbb{C}_{s_{\alpha}(\lambda)}\hookrightarrow H^0(s_{\alpha}, \lambda) $, and there is a short exact sequence of $B$-modules:
			$$0\rightarrow H^0(s_{\alpha}, \lambda-\alpha)\longrightarrow H^0(s_{\alpha}, \lambda)/\mathbb{C}_{s_{\alpha}(\lambda)}\longrightarrow \mathbb{C}_{\lambda}\rightarrow 0.$$ Furthermore, $H^{0}(s_{\alpha}, \lambda- \alpha)=0$ when $\langle\lambda , \alpha \rangle=1.$
			\item[(3)] Let $n=\langle \lambda ,\alpha \rangle.$ As a $B$-module, $H^0(s_{\alpha}, \lambda)$ has a composition series \[0\subseteq V_{n}\subseteq V_{n-1}\subseteq \dots \subseteq V_{0}=H^0(s_{\alpha},\lambda)\] such that  $V_{i}/V_{i+1}\simeq \mathbb{C}_{\lambda - i\alpha}$ for $i=0,1,\dots,n-1$ and $V_{n}=\mathbb{C}_{s_{\alpha}(\lambda)}.$
			
		\end{enumerate}
	\end{lemma}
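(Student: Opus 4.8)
The plan is to reduce the statement to an explicit computation on the rank-one flag variety. First I would identify $X(s_\alpha)=\overline{Bs_\alpha B/B}$ with $P_\alpha/B$, and observe that the Levi factor $L_\alpha$ of the minimal parabolic $P_\alpha$ acts transitively, giving $P_\alpha/B\cong L_\alpha/(L_\alpha\cap B)\cong\mathbb{P}^{1}$. Under this identification the line bundle $\mathcal{L}(s_\alpha,\lambda)$ becomes the $B$-equivariant line bundle on $\mathbb{P}^{1}$ of degree $n:=\langle\lambda,\alpha\rangle$, whose fibre at the base point $x_0=eB$ is $\mathbb{C}_\lambda$ and whose fibre at the other $T$-fixed point $s_\alpha B$ is $\mathbb{C}_{s_\alpha(\lambda)}$. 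By Borel--Weil for the rank-one group (equivalently, the standard $\mathrm{SL}_2$-computation), for $n\ge 0$ the space $H^0(s_\alpha,\lambda)$ is $(n+1)$-dimensional with one-dimensional $T$-weight spaces at the weights $\lambda,\lambda-\alpha,\dots,\lambda-n\alpha$, while for $n<0$ it vanishes; part (1) is then immediate, since for $n=0$ the bundle has degree $0$ and $H^0(s_\alpha,\lambda)$ is one-dimensional of weight $\lambda$, on which $ev$ is an isomorphism.

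Next I would pin down the $B$-module structure. Writing $\mathfrak{b}=\mathfrak{h}\oplus\mathfrak{g}_{-\alpha}$ and letting $f\in\mathfrak{g}_{-\alpha}$ be a nonzero root vector, the $\mathfrak{sl}_2$-theory forces $f$ to carry each weight-$(\lambda-i\alpha)$ line isomorphically onto the weight-$(\lambda-(i+1)\alpha)$ line for $0\le i<n$, and to annihilate the lowest weight line. Choosing a weight basis $v_0,\dots,v_n$ accordingly, any $B$-stable subspace is $T$-stable (hence spanned by some of the $v_i$) and $f$-stable, so the only $B$-submodules are the nested subspaces $V_i:=\langle v_i,\dots,v_n\rangle$, which satisfy $V_i/V_{i+1}\cong\mathbb{C}_{\lambda-i\alpha}$ and $V_n=\mathbb{C}_{s_\alpha(\lambda)}$. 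This is exactly the composition series of part (3), and it simultaneously produces the submodule $\mathbb{C}_{s_\alpha(\lambda)}\hookrightarrow H^0(s_\alpha,\lambda)$ of part (2).

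For the short exact sequence in (2) I would use the restriction-to-base-point sequence $0\to\mathcal{L}(s_\alpha,\lambda)(-x_0)\to\mathcal{L}(s_\alpha,\lambda)\to\mathbb{C}_\lambda\to 0$ of $B$-equivariant sheaves on $\mathbb{P}^{1}$. Since $\mathcal{L}(s_\alpha,\lambda)(-x_0)$ has degree $n-1\ge 0$ (as $n\ge 1$), its first cohomology vanishes, so $ev$ is surjective; as $ev$ is $B$-equivariant and nonzero, its kernel is the unique codimension-one $B$-submodule, namely $V_1$. Passing to the quotient by $V_n=\mathbb{C}_{s_\alpha(\lambda)}$ gives $0\to V_1/V_n\to H^0(s_\alpha,\lambda)/\mathbb{C}_{s_\alpha(\lambda)}\xrightarrow{\ ev\ }\mathbb{C}_\lambda\to 0$, and it remains to identify $V_1/V_n$ with $H^0(s_\alpha,\lambda-\alpha)$. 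Both are $B$-modules with one-dimensional weight spaces at precisely $\lambda-\alpha,\dots,\lambda-(n-1)\alpha$ and with $f$ acting nontrivially between consecutive weights; since the structure established in the previous step (applied to the weight $\lambda-\alpha$, for which $\langle\lambda-\alpha,\alpha\rangle=n-2$) shows that such a uniserial module is determined up to isomorphism by its $\alpha$-string of weights, the two coincide. When $n=1$ one has $V_1=V_n$, so the kernel is $0$, matching $H^0(s_\alpha,\lambda-\alpha)=0$ because $\langle\lambda-\alpha,\alpha\rangle=-1<0$.

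The step I expect to require the most care is the final identification $V_1/V_n\cong H^0(s_\alpha,\lambda-\alpha)$: one must check that equality of $T$-weights really does force a $B$-module isomorphism, which hinges on all weight multiplicities being one and on the unipotent action between adjacent weight spaces being nonzero, so that the module is uniserial with no room for nonsplit ambiguity. Keeping the sign conventions consistent --- that $B$ is the Borel opposite to $B^{+}$, so that its root vector $f$ lowers weights by $\alpha$ and the fibre weights at the two $T$-fixed points come out as $\lambda$ and $s_\alpha(\lambda)$ --- is the other place where a careless slip would invert the direction of the filtration.
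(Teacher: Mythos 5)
Your proof is correct, but note that the paper does not actually prove this lemma: it is recalled without proof as a result of Demazure (the citation to p.~271 points to his two-page note \emph{A very simple proof of Bott's theorem}), so there is no in-paper argument to compare against. What you have written is, in effect, a self-contained reconstruction of Demazure's rank-one computation: identify $X(s_{\alpha})$ with $P_{\alpha}/B\simeq \mathbb{P}^{1}$, read off the $T$-weights $\lambda, \lambda-\alpha,\dots,s_{\alpha}(\lambda)$ of $H^{0}(s_{\alpha},\lambda)$ from Borel--Weil for $SL_{2}$, and use the evaluation sequence at the base point to produce the exact sequence in (2). This buys a complete argument where the paper offers only a citation, and your uniserial-module analysis correctly isolates the one genuinely delicate step, namely the identification $V_{1}/V_{n}\simeq H^{0}(s_{\alpha},\lambda-\alpha)$.

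Two small points should be made explicit to close the argument fully. First, for $\beta\in R^{+}$ with $\beta\neq\alpha$, the root vector in $\mathfrak{g}_{-\beta}\subset\mathfrak{b}$ acts as \emph{zero} on $H^{0}(s_{\alpha},\lambda)$: it would shift $T$-weights by $-\beta$, whereas all weights of $H^{0}(s_{\alpha},\lambda)$ lie in $\lambda+\mathbb{Z}\alpha$ and $\beta\notin\mathbb{Z}\alpha$. This observation is needed in two places you pass over: it is what makes your subspaces $V_{i}$ (which you verify only to be $T$- and $f$-stable) genuinely $\mathfrak{b}$-stable, and it is what upgrades your weight-string comparison from an isomorphism of $B_{\alpha}$-modules to an isomorphism of $B$-modules, since it shows the full $B$-structure on both sides is controlled by $T$ and the $\alpha$-root subgroup alone. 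Second, your rescaling of bases produces an isomorphism of $\mathfrak{b}$-modules; one should add that since $B$ is connected and the ground field has characteristic zero, a $\mathfrak{b}$-equivariant linear map between $B$-modules is automatically $B$-equivariant, and likewise $\mathfrak{b}$-stability of a subspace implies $B$-stability. Both points are routine, but without them the claim that the $V_{i}$ exhaust the $B$-submodules, and hence that $\ker(ev)=V_{1}$, is not literally established.
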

	We define the dot action by $w\cdot \lambda= w(\lambda + \rho)-\rho.$
	Now onwards we will denote the Levi subgroup of $P_{\alpha}$ \space ($\alpha \in S$) containing $T$ by $L_{\alpha}$ and the subgroup $L_{\alpha}\cap B$ by $B_{\alpha}.$
	\begin{lemma}\label{lemma 2.3}
		Let $V$ be an irreducible  $L_{\alpha}$-module. Let $\lambda$
		be a character of $B_{\alpha}$. Then we have 
		\begin{enumerate}
			\item As $L_{\alpha}$-modules, $H^j(L_{\alpha}/B_{\alpha}, V \otimes \mathbb C_{\lambda})\simeq V \otimes
			H^j(L_{\alpha}/B_{\alpha}, \mathbb C_{\lambda})$ for every $j\ge 0.$
			\item If
			$\langle \lambda , \alpha \rangle \geq 0$, then 
			$H^{0}(L_{\alpha}/B_{\alpha} , V\otimes \mathbb{C}_{\lambda})$ 
			is isomorphic as $L_{\alpha}$-module to the tensor product of $V$ and 
			$H^{0}(L_{\alpha}/B_{\alpha} , \mathbb{C}_{\lambda})$. Further, we have 
			$H^{j}(L_{\alpha}/B_{\alpha} , V\otimes \mathbb{C}_{\lambda}) =0$ 
			for every $j\geq 1$.
			
			\item If
			$\langle \lambda , \alpha \rangle  \leq -2$, then 
			$H^{0}(L_{\alpha}/B_{\alpha} , V\otimes \mathbb{C}_{\lambda})=0$, 
			and $H^{1}(L_{\alpha}/B_{\alpha} , V\otimes \mathbb{C}_{\lambda})$
			is isomorphic to the tensor product of  $V$ and $H^{0}(L_{\alpha}/B_{\alpha} , 
			\mathbb{C}_{s_{\alpha}\cdot\lambda})$. 
			\item If $\langle \lambda , \alpha \rangle  = -1$, then 
			$H^{j}( L_{\alpha}/B_{\alpha} , V\otimes \mathbb{C}_{\lambda}) =0$ 
			for every $j\geq 0$.
		\end{enumerate}
	\end{lemma}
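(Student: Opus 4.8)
The plan is to reduce the whole statement to part (1), which is a projection (tensor) identity, and then read off parts (2)--(4) from the classical cohomology of line bundles on $L_\alpha/B_\alpha\cong\mathbb{P}^1$. Once part (1) is in hand, the isomorphism $H^j(L_\alpha/B_\alpha,V\otimes\mathbb{C}_\lambda)\cong V\otimes H^j(L_\alpha/B_\alpha,\mathbb{C}_\lambda)$ reduces everything to knowing the $L_\alpha$-module $H^j(L_\alpha/B_\alpha,\mathbb{C}_\lambda)$ in the three regimes $\langle\lambda,\alpha\rangle\ge 0$, $=-1$, and $\le -2$. I note in passing that irreducibility of $V$ is not actually used; the argument below works for any finite-dimensional $L_\alpha$-module $V$.

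For part (1) the key point is that, since $V$ is a module over the whole group $L_\alpha$ (not merely over $B_\alpha$), the homogeneous bundle $\mathcal{L}(V)$ on $L_\alpha/B_\alpha$ is equivariantly trivial. I would define $\Psi\colon L_\alpha\times_{B_\alpha}V\to (L_\alpha/B_\alpha)\times V$ by $[g,v]\mapsto(gB_\alpha,\,g\cdot v)$, where $g\cdot v$ uses the $L_\alpha$-action on $V$. This is well defined (the representative $(gb^{-1},bv)$ maps to $(gB_\alpha,g\cdot v)$), fibrewise linear, and an isomorphism; moreover it intertwines the translation action $g'\cdot[g,v]=[g'g,v]$ with the diagonal action $g'\cdot(gB_\alpha,u)=(g'gB_\alpha,g'\cdot u)$, so it is $L_\alpha$-equivariant. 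Hence $\mathcal{L}(V)\cong\mathcal{O}_{L_\alpha/B_\alpha}\otimes V$ as $L_\alpha$-equivariant bundles, and tensoring with the line bundle $\mathcal{L}(\mathbb{C}_\lambda)$ yields an $L_\alpha$-equivariant isomorphism $\mathcal{L}(V\otimes\mathbb{C}_\lambda)\cong V\otimes\mathcal{L}(\mathbb{C}_\lambda)$, in which $V$ is now a constant vector-space factor carrying the representation. Because $V$ is finite dimensional, taking cohomology commutes with this constant factor, giving the natural (hence $L_\alpha$-equivariant) isomorphism $H^j(L_\alpha/B_\alpha,V\otimes\mathbb{C}_\lambda)\cong V\otimes H^j(L_\alpha/B_\alpha,\mathbb{C}_\lambda)$ of part (1).

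For parts (2)--(4) I identify $H^j(L_\alpha/B_\alpha,\mathbb{C}_\lambda)$ with $H^j(s_\alpha,\lambda)$ via $P_\alpha/B\cong L_\alpha/B_\alpha$ (so $X(s_\alpha)=L_\alpha/B_\alpha$) and apply Lemma \ref{lemma 2.2} with $w=s_\alpha$ and $v=e$, which is legitimate since $\ell(s_\alpha)=\ell(e)+1$. If $\langle\lambda,\alpha\rangle\ge 0$, Lemma \ref{lemma 2.2}(1) gives $H^j(s_\alpha,\lambda)=H^j(e,H^0(s_\alpha,\lambda))$, which equals $H^0(s_\alpha,\lambda)$ for $j=0$ and vanishes for $j\ge 1$ (cohomology over the point $X(e)$); tensoring with $V$ gives part (2). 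If $\langle\lambda,\alpha\rangle=-1$, Lemma \ref{lemma 2.2}(4) gives $H^j(s_\alpha,\lambda)=0$ for all $j$, hence part (4). If $\langle\lambda,\alpha\rangle\le -2$, then $H^0(s_\alpha,\lambda)=0$ since a negative-degree line bundle on $\mathbb{P}^1$ has no global sections, while Lemma \ref{lemma 2.2}(3) with $j=0$ gives $H^1(s_\alpha,\lambda)=H^0(s_\alpha,s_\alpha\cdot\lambda)$; tensoring with $V$ yields part (3).

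The main obstacle is the equivariance bookkeeping in part (1): one must check that the trivialization $\Psi$ really intertwines the translation and diagonal actions, and that passing to cohomology with the constant factor $V$ respects these structures, so that the final isomorphism is one of $L_\alpha$-modules rather than merely of vector spaces. The cohomology in the three regimes is then routine rank-one Borel--Weil--Bott, already encapsulated in Lemma \ref{lemma 2.2}.
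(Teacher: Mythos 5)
Your proof is correct and follows essentially the same route as the paper: part (1) is the tensor identity, and parts (2)--(4) are read off from Lemma \ref{lemma 2.2} applied with $w=s_{\alpha}$ (and $v=e$) under the identification $L_{\alpha}/B_{\alpha}\simeq P_{\alpha}/B$. The only difference is that the paper simply cites Jantzen (Propositions I.4.8 and I.5.12) for the tensor identity, whereas you prove it directly via the equivariant trivialization $[g,v]\mapsto (gB_{\alpha}, g\cdot v)$; this is a correct, self-contained substitute, and your observation that irreducibility of $V$ is never used is accurate.
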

	
	\begin{proof} (1) follows from \cite[Proposition 4.8, 5.12, p.53, p.77, I]{Jan}.
		
	Proof of (2)--(4) follows from (1) and \cite[Proposition 5.12, p.77, I; Proposition 5.2, p. 218, II]{Jan}.
	\end{proof}
	 Let $w\in W$, $\alpha$ be a simple root, and set $v=ws_{\alpha}$. As a consequence of Lemma \ref{lemma 2.3}, we have the following lemma.
	\begin{lemma} \label{lemma 2.2}
		If $\ell(w) =\ell(v)+1$, then we have
		\begin{enumerate}
			
			\item  If $\langle \lambda , \alpha \rangle \geq 0$, then 
			$H^{j}(w , \lambda) = H^{j}(v, H^0({s_\alpha, \lambda}) )$ for all $j\geq 0$.
			
			\item  If $\langle \lambda ,\alpha \rangle \geq 0$, then $H^{j}(w , \lambda ) = H^{j+1}(w , s_{\alpha}\cdot \lambda)$ for all $j\geq 0$.
			
			\item If $\langle \lambda , \alpha \rangle  \leq -2$, then $H^{j+1}(w , \lambda ) = H^{j}(w ,s_{\alpha}\cdot \lambda)$ for all $j\geq 0$. 
			
			\item If $\langle \lambda , \alpha \rangle  = -1$, then $H^{j}( w ,\lambda)$ vanish for every $j\geq 0$.
			
		\end{enumerate}
		
	\end{lemma}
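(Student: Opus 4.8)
The plan is to exploit the $\mathbb{P}^{1}$-fibration structure of the BSDH-variety together with the elementary cohomology of line bundles on $\mathbb{P}^{1}$, and to deduce all four statements from a single relative-cohomology computation. Since $\ell(w)=\ell(v)+1$ with $v=ws_{\alpha}$, I may choose a reduced expression for $w$ whose last simple reflection is $s_{\alpha}$, so that the projection $f_{r}\colon Z(w,\underline{i})\to Z(v,\underline{i}')$ of the excerpt is a $P_{\alpha}/B\cong\mathbb{P}^{1}$-fibration; write $f=f_{r}$. The first step is to identify the higher direct images $R^{i}f_{*}\mathcal{L}(w,\lambda)$. Computing fibrewise on $P_{\alpha}/B$, where $\mathcal{L}(w,\lambda)$ restricts to the line bundle of degree $\langle\lambda,\alpha\rangle$, these direct images are the homogeneous bundles on $Z(v,\underline{i}')$ associated to the $B$-modules $H^{i}(s_{\alpha},\lambda)$; in short $R^{i}f_{*}\mathcal{L}(w,\lambda)=\mathcal{L}(v,H^{i}(s_{\alpha},\lambda))$.

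With this in hand, each statement follows from the Leray spectral sequence $H^{p}(v,R^{q}f_{*}\mathcal{L}(w,\lambda))\Rightarrow H^{p+q}(w,\lambda)$, which degenerates because in every case at most one of $R^{0}f_{*},R^{1}f_{*}$ is nonzero. For part (1), when $\langle\lambda,\alpha\rangle\geq 0$ the fibre line bundle has degree $\geq 0$, so $R^{1}f_{*}=0$, $R^{0}f_{*}=\mathcal{L}(v,H^{0}(s_{\alpha},\lambda))$, and the sequence collapses to $H^{j}(w,\lambda)=H^{j}(v,H^{0}(s_{\alpha},\lambda))$. For part (4), when $\langle\lambda,\alpha\rangle=-1$ the restriction to each fibre is $\mathcal{O}_{\mathbb{P}^{1}}(-1)$, so both $R^{0}f_{*}$ and $R^{1}f_{*}$ vanish, whence $H^{j}(w,\lambda)=0$ for all $j$.

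Parts (2) and (3) are where the $H^{1}$ of the fibre enters, and I would handle them via the dot action. A direct computation gives $s_{\alpha}\cdot\lambda=\lambda-(\langle\lambda,\alpha\rangle+1)\alpha$, so $\langle s_{\alpha}\cdot\lambda,\alpha\rangle=-\langle\lambda,\alpha\rangle-2$, and the dot action of $s_{\alpha}$ is an involution. For part (2), set $\mu=s_{\alpha}\cdot\lambda$; since $\langle\lambda,\alpha\rangle\geq 0$ we have $\langle\mu,\alpha\rangle\leq -2$, so $\mathcal{L}(\mu)$ has fibre degree $\leq -2$, giving $R^{0}f_{*}\mathcal{L}(\mu)=0$ and, by equivariant Serre duality on $\mathbb{P}^{1}$, $R^{1}f_{*}\mathcal{L}(\mu)=\mathcal{L}(v,H^{1}(s_{\alpha},\mu))=\mathcal{L}(v,H^{0}(s_{\alpha},s_{\alpha}\cdot\mu))=\mathcal{L}(v,H^{0}(s_{\alpha},\lambda))$. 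The collapsed spectral sequence then yields $H^{j+1}(w,s_{\alpha}\cdot\lambda)=H^{j}(v,H^{0}(s_{\alpha},\lambda))=H^{j}(w,\lambda)$, the last equality by part (1). Part (3) follows by applying part (2) to $s_{\alpha}\cdot\lambda$ (which then satisfies $\langle s_{\alpha}\cdot\lambda,\alpha\rangle\geq 0$) and using $s_{\alpha}\cdot(s_{\alpha}\cdot\lambda)=\lambda$.

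The main obstacle I anticipate is the first step: establishing $B$-equivariantly that $R^{1}f_{*}\mathcal{L}(w,\lambda)$ is precisely the homogeneous bundle attached to the $B$-module $H^{1}(s_{\alpha},\lambda)\cong H^{0}(s_{\alpha},s_{\alpha}\cdot\lambda)$. This requires carrying out relative Serre duality along the $\mathbb{P}^{1}$-fibration in a way that respects the $B$-action and matches the weight bookkeeping, rather than merely matching dimensions; once this identification is secured, the rest is a mechanical application of the degenerate Leray spectral sequence and the $\mathrm{SL}_{2}$-level facts recorded in Lemma \ref{lemma 2.1}.
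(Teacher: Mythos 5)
Your proof is correct, and it is essentially the argument the paper is pointing to: the paper states Lemma~\ref{lemma 2.2} without proof, as a standard consequence of Lemma~\ref{lemma 2.1} going back to Demazure, and the canonical proof of that result is exactly your combination of the $\mathbb{P}^1$-fibration $f_r$, the $B$-equivariant identification $R^qf_{r*}\mathcal{L}(w,\lambda)\simeq \mathcal{L}(v,H^q(s_\alpha,\lambda))$, the degenerate Leray spectral sequence, and the equivariant $SL_2$-Serre duality $H^1(s_\alpha,\mu)\simeq H^0(s_\alpha,s_\alpha\cdot\mu)$ for $\langle\mu,\alpha\rangle\le -2$. Your reductions of (2) to (1), and of (3) to (2) via involutivity of the dot action, likewise match the standard treatment, so there is nothing to correct.
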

\begin{proof}
Choose a reduced expression of $w= s_{i_1}s_{i_{2}}\cdots s_{i_r}$ with $\alpha_{i_r}=\alpha.$ Hence
$v= s_{i_1}s_{i_2}\cdots s_{i_{r-1}}$ is a reduced expression for $v.$ Let $\underline{i}=(i_1,i_2,\ldots, i_r)$ and $\underline{i'} = (i_1,i_2,\ldots, i_{r-1}).$ Now consider the $P_{\alpha_{i_{r}}}/B~(\simeq \mathbb{P}^{1})$ fibration  $f_r: Z(w,\underline{i})\longrightarrow Z(v,\underline{i'})$ defined as above. Then, we have an isomorphism \[\displaystyle R^{j}{f_{r}}_{*}\mathcal{L}(w,
\lambda)\simeq \mathcal{L}(v,H^j(s_\alpha,\lambda))~\text{for}~ j\ge 0\]of $B$-linearized sheaves. 

Proof of (1): Since $\langle\lambda,\alpha \rangle\ge 0,$ by Lemma \ref{lemma 2.3}(2) it follows that $R^1{{f_{r}}_{*}}\mathcal{L}(w,\lambda)=\mathcal{L}(v,H^1(s_{\alpha}, \lambda))= 0.$ Therefore by an application of a degenerate case of the Leray spectral sequences (as in \cite[Chapter 14, Section 14.6, p.369, II]{Jan}) we have $H^j(w,\lambda)=H^j(v,H^0(s_{\alpha},\lambda))$ for all $j\ge0.$

Proof of (2): Since $\langle \lambda, \alpha\rangle \ge0,$ we have $\langle s_{\alpha}\cdot \lambda,\alpha\rangle<0.$ Hence by Lemma \ref{lemma 2.3} we have ${{f_{r}}_{*}}\mathcal{L}(w,s_{\alpha}\cdot\lambda)=\mathcal{L}(v,H^0(s_{\alpha}, s_{\alpha}\cdot\lambda))= 0.$ Therefore by an application of a degenerate case of the Leray spectral sequence we have $H^{j+1}(w, s_{\alpha}\cdot \lambda)=H^j(v, H^1(s_{\alpha}, s_{\alpha}\cdot\lambda)).$
By \cite[Theorem 1]{Dem2} we have $H^0(s_{\alpha}, \lambda)=H^1(s_{\alpha}, s_{\alpha}\cdot\lambda).$ Hence the proof follows.

Proof of (3): This case is similar to (2).

Proof of (4): Since $\langle \lambda, \alpha\rangle =-1,$ by Lemma \ref{lemma 2.3}(4) we have ${R^{j}{f_{r}}_{*}}\mathcal{L}(w,\lambda)=0$ for $j\ge 0$. Therefore by an application of a degenerate case of the Leray spectral sequence we have $H^{j}(w, \lambda)=0$ for all $j\ge 0.$
\end{proof}

	The following crucial lemma will be used to compute the cohomology modules in this paper.
	
	Let $p: \widetilde{G}\longrightarrow G$ be the universal cover. Let $\widetilde{L}_{\alpha}$ (respectively, $\widetilde{B}_{\alpha}$) be the inverse image of $L_{\alpha}$ (respectively, $B_{\alpha}$).
	Recall the structure of indecomposable $\widetilde{B}_{\alpha}$-modules (see \cite{CPS},\cite[Corollary 9.1, p.130]{BKS}).
	\begin{lemma}\label{lemma 2.4}
	Any finite dimensional indecomposable $\widetilde{B}_{\alpha}$-module $V$ is isomorphic to $V^{\prime}\otimes \mathbb{C}_{\lambda}$ for some irreducible representation $V^{\prime}$ of $\widetilde{L}_{\alpha}$ and for some character $\lambda$ of $\widetilde{B}_{\alpha}$.
	
	\end{lemma}
	\section{Automorphism group of a $G$-induced variety}
	In this section we study the connected component containing the identity automorphism of the group of all algebraic automorphisms of a $G$-induced variety.

	Let $F$ be an irreducible projective $B$-variety and $E=G\times^{B} F$ be the $G$-induced variety associated to $F.$ Consider the natural projection map \[\pi: E\longrightarrow G/B; \hspace{.3cm} [g,f]\mapsto gB.\] Then for the natural action of $G$ on $G/B,$ $\pi$ is a $G$-equivariant fibration over $G/B$ with fiber $F.$
	
	{\bf Observation:} For a $G$-induced variety $E$, if the action of $B$ on $F$ extends to an action of $G$ on $F$ then the map \[\psi: G\times F\longrightarrow G/B\times F, \hspace{.3cm} (g,f)\mapsto (gB, gf)\] induces $G$-equivariant isomorphism $G\times^{B}F \longrightarrow G/B\times F,$ where $G$ acts diagonally on $G/B \times F.$

	\begin{proposition}\label{prop 3.1}
		Then $\pi$ induces a surjective homomorphism $\pi_{*} :{\rm Aut}^0(E)\longrightarrow G$ of algebraic groups. In particular, ${\rm Aut}^0(E)=\ker\pi_{*}\rtimes G,$ where $\ker\pi_{*}$ denotes the kernel of $\pi_{*}.$
	\end{proposition}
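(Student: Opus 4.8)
The plan is to show that the projection $\pi\colon E\to G/B$ induces a morphism of algebraic groups $\pi_*\colon \mathrm{Aut}^0(E)\to \mathrm{Aut}^0(G/B)$, to identify the target with $G$, to prove surjectivity, and finally to deduce the semidirect product decomposition. First I would recall from the excerpt that $\mathrm{Aut}^0(E)$ and $\mathrm{Aut}^0(G/B)$ are algebraic groups, and that by Demazure's result $\mathrm{Aut}^0(G/B)=G$ (since $G$ is of adjoint type, the action of $G$ on $G/B$ identifies $G$ with the full connected automorphism group). To build $\pi_*$, I would argue that any automorphism $\varphi\in\mathrm{Aut}^0(E)$ descends to an automorphism of the base: since $\pi$ is the natural fibration and $F$ is irreducible projective, the fibers of $\pi$ are intrinsically characterized (for instance as the images of the maximal connected subvarieties on which $\pi$ is constant, or via the Albanese/Stein-type functoriality of the fibration), so $\varphi$ must permute fibers and hence induce a well-defined $\bar\varphi\in\mathrm{Aut}(G/B)$. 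Continuity in the family gives that $\bar\varphi$ lies in $\mathrm{Aut}^0(G/B)=G$, and the assignment $\varphi\mapsto\bar\varphi$ is a homomorphism of algebraic groups, which is $\pi_*$.

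Next I would establish surjectivity. The key point is that the natural $G$-action on $E$, namely $g'\cdot[g,f]=[g'g,f]$, is $\pi$-equivariant: $\pi(g'\cdot[g,f])=g'gB=g'\cdot\pi([g,f])$. Thus the structure map $G\to\mathrm{Aut}^0(E)$ sending $g'$ to its action on $E$ composes with $\pi_*$ to give the identity-type map $G\to\mathrm{Aut}^0(G/B)=G$, which is the standard (surjective) action homomorphism. Concretely, $\pi_*$ restricted to the image of $G$ in $\mathrm{Aut}^0(E)$ is an isomorphism onto $\mathrm{Aut}^0(G/B)=G$, since $G$ acts on $G/B$ with trivial kernel ($G$ being of adjoint type). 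This simultaneously proves that $\pi_*$ is surjective and that it admits a section, namely the inclusion $G\hookrightarrow\mathrm{Aut}^0(E)$ coming from the $G$-action.

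Finally, the semidirect product decomposition follows formally: $\pi_*\colon\mathrm{Aut}^0(E)\to G$ is a surjective homomorphism of algebraic groups with kernel $\ker(\pi_*)$, and the map $G\to\mathrm{Aut}^0(E)$ given by the $G$-action is a section of $\pi_*$. A split surjection of algebraic groups yields $\mathrm{Aut}^0(E)=\ker(\pi_*)\rtimes G$, where $G$ acts on $\ker(\pi_*)$ by conjugation inside $\mathrm{Aut}^0(E)$. I expect the main obstacle to be the first step, namely rigorously producing the descended automorphism $\bar\varphi$ and checking that $\varphi\mapsto\bar\varphi$ is a morphism of algebraic groups rather than merely a set map: one must verify that an arbitrary automorphism in the identity component genuinely preserves the fibration $\pi$ (i.e. sends fibers to fibers) and that this descent is algebraic and functorial in families. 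The intrinsic characterization of the fibers of $\pi$—presumably exploiting that $G/B$ is rational and homogeneous while the fibers $F$ are the distinguished irreducible projective subvarieties contracted by $\pi$—is the crux, and once it is in place the remaining steps are formal.
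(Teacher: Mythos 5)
Your overall skeleton---descend automorphisms along $\pi$, identify $\mathrm{Aut}^0(G/B)$ with $G$ via Demazure, use the natural $G$-action on $E$ as a section of $\pi_*$, and conclude the semidirect product decomposition---is exactly the paper's, and your surjectivity and splitting steps are correct (the paper phrases surjectivity via injectivity of $\sigma\colon G\to \mathrm{Aut}^0(E)$, which holds because the action is effective and $G$ is simple of adjoint type; your observation that $\pi_*\circ\sigma=\mathrm{id}_G$ by $G$-equivariance of $\pi$ is the same point). The genuine gap is the one you yourself flag: the construction of $\pi_*$. Your proposed justification, that the fibers of $\pi$ are ``intrinsically characterized'' so that any automorphism must permute them, fails: your first characterization (``maximal connected subvarieties on which $\pi$ is constant'') refers to $\pi$ itself and is circular, and no $\pi$-free characterization can exist. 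Indeed, take $F=G/B$ with the restricted $B$-action, so that $E\simeq G/B\times G/B$ and $\pi$ becomes the first projection; the swap $(x,y)\mapsto(y,x)$ is an automorphism of $E$ that does not permute the fibers of $\pi$. Albanese or MRC-type functoriality also gives nothing here: in the paper's applications $F$ is a Schubert variety, hence rationally connected, and in the example above both projections are on an equal footing. So an arbitrary automorphism of $E$ need not descend at all; what saves the statement is precisely connectedness of the acting group, and this cannot be restored by ``continuity'' after the fact, because the existence of the descended map $\bar\varphi$ is what is in question.

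The missing ingredient is Blanchard's lemma in Brion's form \cite[Corollary 2.2, p.45]{Bri}: if $f\colon X\to Y$ is a proper morphism with $f_*\mathcal{O}_X=\mathcal{O}_Y$, then every action of a \emph{connected} algebraic group on $X$ descends to an action on $Y$ making $f$ equivariant; applied to $\mathrm{Aut}^0(E)$ this produces the homomorphism $\pi_*\colon \mathrm{Aut}^0(E)\to \mathrm{Aut}^0(G/B)=G$ of algebraic groups. The hypothesis is available exactly because $F$ is irreducible and projective: $H^0(F,\mathcal{O}_F)=\mathbb{C}$, and since $\pi$ is a proper $F$-fibration this gives $\pi_*\mathcal{O}_E=\mathcal{O}_{G/B}$ (this is the first line of the paper's proof). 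The proof of that lemma is a rigidity argument---a morphism from $H\times F$ to $G/B$, with $F$ complete and connected and $H$ connected, which contracts $\{1\}\times F$ to a point must contract every $\{h\}\times F$ to a point---and not any characterization of the fibers. With this lemma inserted in place of your first step, the remainder of your argument goes through verbatim and coincides with the paper's proof of $\mathrm{Aut}^0(E)=\ker(\pi_*)\rtimes G$.
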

	\begin{proof}
		Since $F$ is an irreducible projective variety, we have $\pi_{*}\mathcal{O}_{E} =\mathcal{O}_{G/B},$ where $\mathcal{O}_{E}$ and $\mathcal{O}_{F}$ denote the structure sheaf on $E$ and $F$ respectively. Therefore, by \cite[Corollary 2.2, p.45]{Bri}, $\pi$ induces an algebraic group homomorphism \[\pi_{*}: {\rm Aut}^0(E)\longrightarrow {\rm Aut}^0(G/B)\] defined as follows: \[f\mapsto \pi_{*}(f):gB\mapsto \pi(f(y)); ~\text{where}~y\in\pi^{-1}(gB).\] Since $\pi^{-1}(gB)$ is connected projective variety, by the rigidity lemma it follows that $\pi_{*}(f)$ is well defined (see \cite[Proposition 2.1., p.42]{Bri}).
		Further, since ${\rm Aut}^0(G/B)=G$ (see \cite{Dem aut}, \cite[Theorem 2, p.75]{Akh}), we have $\pi_{*}: {\rm Aut}^0(E) \longrightarrow G.$
		
		Let $\sigma: G\longrightarrow {\rm Aut}^0(E)$ be the map induced by the natural action of $G$ on $E.$ Note that $\sigma$ is not a trivial map as the action of $G$ on $E$ is effective because it descends to the effective action of $G$ on $G/B$. Thus, $\sigma :G \longrightarrow {\rm Aut}^0(E)$ is an injective homomorphism of algebraic groups. Hence, $\pi_{*}$ is a surjective homomorphism of algebraic groups. Therefore, we have ${\rm Aut}^0(E) = \ker\pi_{*}\rtimes G.$
	\end{proof}

	It would be an interesting question to ask when does there exist an isomorphism between  $E$ and  $G/B\times F?$
	
	We have already observed that if the action of $B$ on $F$ extends to an action of $G$ on $F,$ then there is a $G$-equivariant isomorphism between $E$ and $G/B \times F.$ 
	
	Here, we give another sufficient condition under which there is a $G$-equivariant isomorphism between $E$ and $G/B\times F.$ 
	\begin{proposition}
		Assume that there exists a $B$-equivariant morphism  $\Phi: E\longrightarrow F$ such that $\Phi_{*}\mathcal{O}_{E}=\mathcal{O}_{F}.$ Then we have 
		\begin{itemize}
			\item [(i)]  $E\simeq G/B\times F.$  
			
			\item [(ii)] ${\rm Aut}^0(E)=G\times {\rm Aut}^0(F).$
		\end{itemize}
	\end{proposition}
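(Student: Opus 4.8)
The plan is to build the isomorphism directly from the two available fibrations and then deduce the automorphism statement by a Künneth computation. For part (i), I would consider the morphism
\[
\Lambda := (\pi,\Phi)\colon E \longrightarrow G/B\times F,\qquad [g,f]\longmapsto (gB,\ \Phi([g,f])),
\]
which is well defined because $\Phi$ is a function on $E$. Note that $\mathrm{pr}_1\circ\Lambda=\pi$, so $\Lambda$ is a morphism over $G/B$; on the fibre $\pi^{-1}(gB)\cong F$ it restricts to $\Phi_g\colon F\to F$, $f\mapsto\Phi([g,f])$. Thus $\Lambda$ is an isomorphism if and only if every $\Phi_g$ is an isomorphism. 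The $B$-equivariance of $\Phi$ gives the relations $\Phi_{bg}=b\circ\Phi_g$ and $\Phi_{gb}=\Phi_g\circ b$ for $b\in B$ (using $[bg,f]=b\cdot[g,f]$ and $[gb,f]=[g,bf]$); in particular $\Phi_e=\Phi|_{\pi^{-1}(eB)}$ is a $B$-equivariant endomorphism of $F$, its image $\Phi_e(F)$ is a $B$-stable closed subvariety of $F$, and the locus $\{gB : \Phi_g\text{ is an isomorphism}\}$ is a $B$-stable subset of $G/B$.

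The heart of the proof is to show $\Lambda$ is an isomorphism. Since $E$ is projective, $\Lambda$ is proper, so it suffices to show that $\Lambda$ is bijective and that the natural map $\mathcal{O}_{G/B\times F}\to\Lambda_{*}\mathcal{O}_{E}$ is an isomorphism (a finite morphism with this property is an isomorphism). The hypothesis $\Phi_{*}\mathcal{O}_{E}=\mathcal{O}_{F}$ means the Stein factorization of $\Phi$ is trivial, i.e.\ $\Phi$ is surjective with connected fibres; combined with $\pi_{*}\mathcal{O}_{E}=\mathcal{O}_{G/B}$ (from Proposition \ref{prop 3.1}) and the fact that $\pi$ is \emph{Zariski}-locally trivial (as $B$ is a special group, $G\to G/B$ has local sections), I would work over a trivializing open $U\subseteq G/B$, where $\pi^{-1}(U)\cong U\times F$ and $\Lambda|_U(u,f)=(u,\Phi_u(f))$. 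The geometric content is that each fibre $\Phi^{-1}(y)$ is a section of $\pi$, equivalently that every $\Phi_g$ is an isomorphism. I would first obtain this generically: a degree-and-dimension count shows that connectedness of the $\Phi$-fibres together with the $B$-equivariant control of the images $\Phi_g(F)$ forces the general $\Phi_g$ to be dominant of degree one, so $\Lambda$ is birational, hence an isomorphism over a $B$-stable dense open of $G/B$; then I would propagate to all $g$ using properness of $\Lambda$ and the identity $\Lambda_{*}\mathcal{O}_{E}=\mathcal{O}_{G/B\times F}$ (obtained by flat base change along the local trivialization from $\Phi_{*}\mathcal{O}_{E}=\mathcal{O}_{F}$). \textbf{The main obstacle} is exactly this last step: upgrading ``$\Phi_g$ is an isomorphism for generic $g$'' to ``for all $g$'' when $F$ is only assumed irreducible and projective (not normal or smooth), where a bijective birational proper self-map need not a priori be an isomorphism; the $\mathcal{O}$-pushforward condition on $\Lambda$ is precisely what is designed to close this gap, reducing the claim to the finiteness-plus-$\mathcal{O}$-iso criterion.

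For part (ii), I would deduce it from (i) together with $Aut^{0}(G/B)=G$. Having fixed an (abstract) isomorphism $E\cong G/B\times F$, it suffices to show $Aut^{0}(G/B\times F)=G\times Aut^{0}(F)$. Since $G/B$ is smooth, the cotangent sheaf of the product splits as $\Omega_{G/B\times F}=\mathrm{pr}_1^{*}\Omega_{G/B}\oplus\mathrm{pr}_2^{*}\Omega_{F}$, whence $\Theta_{G/B\times F}=\mathrm{pr}_1^{*}\Theta_{G/B}\oplus\mathrm{pr}_2^{*}\Theta_{F}$; applying the Künneth formula and using that $H^{0}(G/B,\mathcal{O})=H^{0}(F,\mathcal{O}_{F})=\mathbb{C}$ (both factors irreducible and projective) gives
\[
H^{0}(G/B\times F,\ \Theta_{G/B\times F})\;\cong\;H^{0}(G/B,\Theta_{G/B})\ \oplus\ H^{0}(F,\Theta_{F})\;=\;\mathfrak{g}\ \oplus\ H^{0}(F,\Theta_{F}).
\]
This is exactly the Lie algebra of $G\times Aut^{0}(F)$, since $\mathrm{Lie}(G)=\mathfrak{g}$ and $\mathrm{Lie}(Aut^{0}(F))=H^{0}(F,\Theta_{F})$. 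The coordinatewise action yields an injective homomorphism $G\times Aut^{0}(F)\hookrightarrow Aut^{0}(G/B\times F)$ of connected algebraic groups whose differential is the above identification, hence an isomorphism on Lie algebras; an injective homomorphism of connected algebraic groups inducing an isomorphism on Lie algebras is an isomorphism, so $Aut^{0}(E)=Aut^{0}(G/B\times F)=G\times Aut^{0}(F)$, completing the proof.
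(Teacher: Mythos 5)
Your strategy for (i) hinges on the claim that $\Lambda=(\pi,\Phi)\colon E\to G/B\times F$ is an isomorphism, equivalently that every $\Phi_g\colon F\to F$, $f\mapsto \Phi([g,f])$, is an automorphism of $F$. This is false under the stated hypotheses, so the gap is not the one you flag (upgrading "generic $g$" to "all $g$") but is fatal to the approach itself. Counterexample: take $F=G/B$ with its left $B$-action and let $\Phi$ be $\pi$ followed by the identification $G/B\simeq F$, i.e.\ $\Phi([g,g'B])=gB$. This $\Phi$ is $B$-equivariant, and $\Phi_{*}\mathcal{O}_{E}=\mathcal{O}_{F}$ because $\pi_{*}\mathcal{O}_{E}=\mathcal{O}_{G/B}$; moreover the conclusion of the proposition does hold here, since $[g,g'B]\mapsto (gB,gg'B)$ is an isomorphism $E\simeq G/B\times G/B$. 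Yet your map $\Lambda=(\pi,\pi)$ collapses $E$ onto the diagonal of $G/B\times G/B$, and every $\Phi_g$ is the \emph{constant} map $f\mapsto gB$: the fibres of $\Phi$ are exactly the fibres of $\pi$, not sections of $\pi$, and no $\Phi_g$ is even dominant, so the "degree-and-dimension count" step fails outright. In general your $\Lambda$ differs from a genuine isomorphism by the fibrewise endomorphism $\Phi_e\colon f\mapsto\Phi([e,f])$, and $B$-equivariance plus $\Phi_{*}\mathcal{O}_{E}=\mathcal{O}_{F}$ do not force $\Phi_e$ to be invertible.

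The paper's proof never tries to build the isomorphism out of $\Phi$ pointwise: by \cite[Corollary 2.2, p.45]{Bri}, the hypothesis $\Phi_{*}\mathcal{O}_{E}=\mathcal{O}_{F}$ yields a homomorphism of algebraic groups $\Phi_{*}\colon Aut^0(E)\to Aut^0(F)$; since $G\subset Aut^0(E)$ by Proposition \ref{prop 3.1}, this transports the $G$-action on $E$ to a $G$-action on $F$, namely $g*f=\Phi(g\cdot z)$ for $z\in \Phi^{-1}(f)$, which extends the given $B$-action precisely because $\Phi$ is $B$-equivariant; the Observation at the start of Section 3 then gives the isomorphism $[g,f]\mapsto (gB, g*f)$ (in the counterexample above this is $[g,g'B]\mapsto(gB,gg'B)$, not your $\Lambda$). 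Your part (ii) — splitting the tangent sheaf of the product, Künneth, and the injective homomorphism $G\times Aut^0(F)\hookrightarrow Aut^0(G/B\times F)$ with bijective differential — is a sound argument in characteristic zero (it essentially reproves, in this special case, the result \cite[Corollary 2.3, p.46]{Bri} that the paper simply cites), but it is contingent on (i), so the proposal as a whole does not establish the proposition.
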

	\begin{proof}
		Proof of (i): 
		Since $\Phi_{*}\mathcal{O}_{E} =\mathcal{O}_{F},$ by \cite[Corollary 2.2, p.45]{Bri} $\Phi$ induces an algebraic group homomorphism $\Phi_{*}: {\rm Aut}^0(E)\longrightarrow {\rm Aut}^0(F).$ Note that  by Proposition \ref{prop 3.1}, $G\subset {\rm Aut}^0(E).$ Thus $G$ acts on $F$ via the map $\Phi_{*},$ i.e., the action of $G$ on $F$ is given by $g*f=\Phi(g\cdot z),$ where $g\in G, f\in F$ and $z\in \Phi^{-1}(f)$ (see \cite[Proof of Proposition 2.1, p.42]{Bri}). Further, since $\Phi$ is  $B$-equivariant, this action of $G$ on $F$ is an extension of the $B$ action on $F.$ Therefore, by the above observation we have a $G$-equivariant isomorphism $E\simeq G/B\times F$.
		
		Proof of (ii):	By using (i) and \cite[Corollary 2.3, p.46]{Bri}, we have ${\rm Aut}^0(E)= {\rm Aut}^0(G/B)\times {\rm Aut}^0(F).$ Moreover, since ${\rm Aut}^0(G/B)=G$ (see \cite{Dem aut}), we have ${\rm Aut}^0(E)= G\times {\rm Aut}^0(F).$ 
		
	\end{proof}
	
	\begin{theorem}\label{thm 3.3}
		Let $F,$ $E$ be as before. Let $\Theta_F$ (respectively, $\Theta_{E}$) be the tangent sheaf of $F$ (respectively, of $E$). Then we have
		\begin{itemize}
			\item [(i)] ${\rm Aut}^0(E)=G,$ if $H^0(G/B, H^0(F, \Theta_{F}))= 0.$
			
			\item[(ii)]	Assume that $F$ satisfies $H^j(F, \mathcal{O}_{F})=0$ for all $j\ge 1.$ Then $H^1(E, \Theta_{E})=H^0(G/B, H^1(F, \Theta_{F})),$ if $H^j(G/B,H^0(F, \Theta_{F}))=0$ for $j = 1, 2.$
		\end{itemize}
	\end{theorem}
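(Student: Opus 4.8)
The plan is to analyze the fibration $\pi \colon E \to G/B$ with fiber $F$ using the Leray spectral sequence applied to the tangent sheaf $\Theta_E$, together with the relative tangent sequence. First I would set up the short exact sequence of sheaves on $E$,
\begin{equation*}
0 \longrightarrow \Theta_{E/(G/B)} \longrightarrow \Theta_E \longrightarrow \pi^{*}\Theta_{G/B} \longrightarrow 0,
\end{equation*}
where $\Theta_{E/(G/B)}$ is the relative tangent sheaf, whose restriction to each fiber is $\Theta_F$. The point of the hypotheses is to control each of the two outer terms after pushing forward along $\pi$ and then taking cohomology over $G/B$. I would first identify the pushforwards $R^j\pi_{*}\Theta_{E/(G/B)}$ with the homogeneous bundles $\mathcal{L}\bigl(H^j(F,\Theta_F)\bigr)$ on $G/B$, using the local triviality of $E = G\times_B F$ and the $B$-module structure on the fiber cohomology $H^j(F,\Theta_F)$ (this is where the notation $H^j(G/B, H^j(F,\Theta_F))$ acquires meaning, via the preliminaries of \S\ref{sec2}).

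Second, for part (i), I would assemble these facts to compute $H^0(E,\Theta_E)$, which by the introduction is the Lie algebra of $Aut^0(E)$. Taking global sections of the tangent sequence and using the Leray spectral sequence, the contribution of $\pi^{*}\Theta_{G/B}$ to $H^0$ gives $H^0(G/B,\Theta_{G/B}) = \mathfrak{g}$ (since $Aut^0(G/B) = G$ by Demazure), while the contribution of $\Theta_{E/(G/B)}$ is governed by $H^0\bigl(G/B, \pi_{*}\Theta_{E/(G/B)}\bigr) = H^0\bigl(G/B, H^0(F,\Theta_F)\bigr)$, which vanishes by hypothesis. Thus $H^0(E,\Theta_E) = \mathfrak{g}$, and combined with Proposition \ref{prop 3.1} (which gives the splitting $Aut^0(E) = \ker(\pi_{*})\rtimes G$ and the inclusion $G\hookrightarrow Aut^0(E)$), the equality of Lie algebra dimensions forces $\ker(\pi_{*})$ to be trivial, yielding $Aut^0(E) = G$.

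Third, for part (ii), I would compute $H^1(E,\Theta_E)$ via the Leray spectral sequence $E_2^{p,q} = H^p\bigl(G/B, R^q\pi_{*}\Theta_E\bigr) \Rightarrow H^{p+q}(E,\Theta_E)$, carefully tracking the two pieces of the tangent sequence. The hypothesis $H^j(F,\mathcal{O}_F) = 0$ for $j\ge 1$ is what lets me control $R^q\pi_{*}\pi^{*}\Theta_{G/B}$ via the projection formula: $R^q\pi_{*}\pi^{*}\Theta_{G/B} = \Theta_{G/B}\otimes R^q\pi_{*}\mathcal{O}_E$, and $R^q\pi_{*}\mathcal{O}_E$ corresponds to the bundle associated to $H^q(F,\mathcal{O}_F)$, which vanishes for $q\ge 1$, so only $q = 0$ survives and contributes the flag-variety term $H^1(G/B,\Theta_{G/B})$. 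Since Demazure's vanishing gives $H^1(G/B,\Theta_{G/B}) = 0$, the $\pi^{*}\Theta_{G/B}$ factor contributes nothing to $H^1(E,\Theta_E)$. It then remains to extract the relative contribution: the long exact sequence of the tangent sequence and the vanishing hypotheses $H^j\bigl(G/B, H^0(F,\Theta_F)\bigr) = 0$ for $j = 1,2$ should kill the boundary maps and the $H^0(F,\Theta_F)$-pieces in the relevant spectral sequence entries, isolating $H^0\bigl(G/B, R^1\pi_{*}\Theta_{E/(G/B)}\bigr) = H^0\bigl(G/B, H^1(F,\Theta_F)\bigr)$ as the only surviving term.

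The main obstacle I anticipate is the bookkeeping in the spectral sequence for part (ii): one must show that all potentially interfering $E_2$-terms vanish and that no nonzero differentials obstruct the identification. Concretely, the terms $H^1\bigl(G/B, R^0\pi_{*}\Theta_{E/(G/B)}\bigr)$ and $H^2\bigl(G/B, R^0\pi_{*}\Theta_{E/(G/B)}\bigr)$ — which equal $H^1\bigl(G/B, H^0(F,\Theta_F)\bigr)$ and $H^2\bigl(G/B, H^0(F,\Theta_F)\bigr)$ — are precisely what the two vanishing hypotheses for $j=1,2$ are designed to annihilate, so the delicate point is to verify that these are exactly the terms entering the five-term exact sequence (and the relevant $d_2$ differential) that computes $H^1$. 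I would also need to confirm that the relative tangent sheaf pushforward behaves well enough that $R^1\pi_{*}\Theta_{E/(G/B)}$ genuinely equals the homogeneous bundle built from $H^1(F,\Theta_F)$, which relies on cohomology-and-base-change over the flag variety given the $B$-equivariant structure.
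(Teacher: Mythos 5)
Your proposal is correct and follows essentially the same route as the paper's proof: the relative tangent sequence $0\to\mathcal{R}\to\Theta_E\to\pi^{*}\Theta_{G/B}\to 0$, the projection formula with $H^j(F,\mathcal{O}_F)=0$ plus Demazure's vanishing $H^j(G/B,\Theta_{G/B})=0$ to eliminate the $\pi^{*}\Theta_{G/B}$-contribution, and the Leray five-term exact sequence for the relative tangent sheaf, whose potentially obstructing terms are exactly the hypotheses $H^j(G/B,H^0(F,\Theta_F))=0$ for $j=1,2$. The one step you leave vague --- the vanishing of the connecting map $H^0(E,\pi^{*}\Theta_{G/B})\to H^1(E,\mathcal{R})$, which is needed to identify $H^1(E,\Theta_E)$ with $H^1(E,\mathcal{R})$ and does \emph{not} follow from the stated vanishing hypotheses --- is supplied in the paper (and implicitly by your appeal to Proposition \ref{prop 3.1}) by the injectivity of $d\sigma:\mathfrak{g}\to H^0(E,\Theta_E)$ coming from the $G$-action, which forces $H^0(E,\Theta_E)\to H^0(E,\pi^{*}\Theta_{G/B})$ to be surjective.
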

	\begin{proof}
		Proof of (i): Recall that $\pi: E\longrightarrow G/B$ is the natural projection given by $[g, f]\mapsto gB,$ where $g\in G,$ and $f\in F.$
		
		Consider the exact sequence of $\mathcal{O}_E$-modules
		\begin{equation}\label{eq3.1}
			0\longrightarrow \mathcal{R}\longrightarrow \Theta_{E}\longrightarrow \pi^*\Theta_{G/B}\longrightarrow 0, 
		\end{equation}
		where $\mathcal{R}$ denotes the relative tangent sheaf with respect to the map $\pi.$

	Therefore, \eqref{eq3.1} induces the following long exact sequence
	\begin{equation}\label{eq3.4}
		0\rightarrow H^0(E, \mathcal{R})\rightarrow H^0(E, \Theta_{E})\rightarrow H^0(E, \pi^{*}\Theta_{G/B})\rightarrow H^1(E, \mathcal{R})\rightarrow H^1(E, \Theta_{E})\rightarrow \cdots
	\end{equation}
	of $G$-modules.
	
	Since $H^0(F, \mathcal{O}_F)=\mathbb{C}$ and $\pi$ is a projective morphism, we have 
	\begin{equation}\label{eq3.2}
	\pi_*(\pi^{*}\mathcal{O}_{G/B})=\pi_{*}\mathcal{O}_E=\mathcal{O}_{G/B}.
	\end{equation}
		Now by using projection formula (see \cite[Chapter III, Ex 8.3, p.253]{Har}) and \eqref{eq3.2}, we have 
		\begin{equation}\label{eq3.3}
			\pi_{*}(\pi^*\Theta_{G/B}) = \Theta_{G/B}\otimes \pi_{*}\mathcal{O}_{E} = \Theta_{G/B}. 	
		\end{equation}
		Further, since $H^0(G/B, \Theta_{G/B})=\mathfrak{g}$  (see \cite{Dem aut},\cite[Theorem 2, p.75 and Theorem 1, p.130]{Akh}), we have $H^0(E, \pi^*\Theta_{G/B})=\mathfrak{g}.$

On the other hand, by Proposition \ref{prop 3.1}, we see that $\sigma: G\longrightarrow {\rm Aut}^0(E)$ is an injective homomorphism of algebraic groups. Since Lie(${\rm Aut}^0(E)$) = $H^0(E,\Theta_{E})$
	(see \cite[Lemma 3.4, p.13]{MO}), the differential $d\sigma :\mathfrak{g}\longrightarrow H^0(E, \Theta_{E})$ is an injective
	homomorphism of Lie algebras.
		
Therefore, \eqref{eq3.4} gives the following short exact sequence
\begin{equation}\label{eq3.5}
0\longrightarrow H^0(E, \mathcal{R})\longrightarrow H^0(E, \Theta_{E})\longrightarrow H^0(E, \pi^{*}\Theta_{G/B})\longrightarrow 0
\end{equation}
of $G$-modules.

Now, since the restriction of $\mathcal{R}$ to $F$ coincides with the tangent sheaf $\Theta_{F}$ of $F,$ it follows that $H^0(E, \mathcal{R})=H^0(G/B, H^0(F, \Theta_{F})).$ Thus, we have  $H^0(E, \mathcal{R})= 0,$ as $H^0(G/B, H^0(F, \Theta_{F} )) = 0.$
		Therefore, by using \eqref{eq3.5}, we have $H^0(E, \Theta_E)=\mathfrak{g},$ as $H^0(E, \pi^*\Theta_{G/B}) = \mathfrak{g}.$  Hence, ${\rm Aut}^0(E)=G.$ 
		
		Proof of (ii): 	Since $H^j(F,\mathcal{O}_F)= 0$ for $j \ge 1,$ we have
		\begin{equation}\label{eq3.6}
			R^j\pi_{*}(\pi^{*}\mathcal{O}_{G/B})=R^j\pi_{*}\mathcal{O}_{E}= 0 \text{~for~} j\ge 1.
		\end{equation}
	Therefore, by using projection formula (see \cite[Chapter III, Ex 8.3, p.253]{Har}) and \eqref{eq3.6}, we have 
	\begin{equation}\label{eq3.7}
		R^j\pi_{*}(\pi^*\Theta_{G/B})= \Theta_{G/B}\otimes R^j\pi_{*}(\mathcal{O}_E)= 0 \text{~for all~} j\ge 1. 	
	\end{equation}
		The $E_{2}^{i,j}$ term of Leray spectral sequence for $\pi$ and $\pi^{*}\Theta_{G/B}$ is
		\begin{equation}
			E_{2}^{i,j}= H^i(G/B, R^j\pi_{*}(\pi^{*}\Theta_{G/B})).
		\end{equation}	
		Since $R^j\pi_{*}(\pi^{*}\Theta_{G/B})=0$ for all $j\ge 1$ (see \eqref{eq3.7}), we have $E_{2}^{i,j}= 0$ for $j\ge 1.$ Therefore, by using degenerate case of Leray spectral sequence and \eqref{eq3.3}, we have $$H^j(E, \pi^*\Theta_{G/B})=H^j(G/B, \pi_{*}(\pi^*\Theta_{G/B}))=H^j(G/B, \Theta_{G/B})$$ for $j\ge 1.$
		
Now, since $H^j(G/B, \Theta_{G/B})=0$ for all $j\ge 1$ (see \cite{Dem aut},\cite[Theorem 2, p.75 and Theorem 1, p.130]{Akh}), we have $H^j(E,\pi^*\Theta_{G/B}) = 0$ for $j\ge 1.$ Therefore, \eqref{eq3.4} induces the following exact sequence
			\begin{equation*}
				0\rightarrow H^0(E, \mathcal{R})\rightarrow H^0(E, \Theta_{E})\rightarrow H^0(E, \pi^{*}\Theta_{G/B})\rightarrow H^1(E, \mathcal{R})\rightarrow H^1(E, \Theta_{E})\rightarrow 0
			\end{equation*}
of $G$-modules and
			\begin{equation}\label{eq3.9}
				H^j(E, \mathcal{R})\simeq H^j(E, \Theta_{E}) \text{~for~} j \ge 2. 
			\end{equation}	
Moreover, by using \eqref{eq3.5} and \eqref{eq3.9}, we have
		
		\begin{equation}
			H^j(E, \mathcal{R})\simeq H^j(E, \Theta_{E}) \text{~for~} j\ge 1.
		\end{equation}
Now, since $H^j(G/B, H^0(F, \Theta_F )) = 0$ for $j = 1, 2,$ by using the five term exact sequence associated to the spectral sequence, we have $H^1(E, \Theta_{E})= H^0(G/B, H^1(F, \Theta_{F})).$ 
	\end{proof}
	
	\begin{corollary}\label{cor 3.4}
		Let $F,$ $E$ be as in Theorem \ref{thm 3.3} and $F$ satisfies $H^0(G/B, H^0(F,\Theta_{F}))=0$ but $H^0(F, \Theta_{F})\neq 0.$ Then $E$ is not isomorphic to $G/B\times F.$ In particular, the action of $B$
		on $F,$ cannot be extended to an action of $G$ on $F.$
	\end{corollary}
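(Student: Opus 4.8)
The plan is to argue by contradiction, using the computation of $Aut^0(E)$ from Theorem \ref{thm 3.3}(i) together with the behaviour of the identity component of the automorphism group under products. Since the standing hypothesis $H^0(G/B, H^0(F,\Theta_F))=0$ is exactly the hypothesis of Theorem \ref{thm 3.3}(i), I would first record that $Aut^0(E)=G$, so in particular $\dim Aut^0(E)=\dim G$.

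Next I would suppose, for contradiction, that there is an isomorphism of varieties $E\simeq G/B\times F$. Both $G/B$ and $F$ are irreducible projective varieties, so $H^0(G/B,\mathcal{O}_{G/B})=H^0(F,\mathcal{O}_F)=\mathbb{C}$; this is precisely the situation in which the product formula \cite[Corollary 2.3, p.46]{Bri} applies, yielding
\begin{equation*}
	Aut^0(E)\simeq Aut^0(G/B\times F)=Aut^0(G/B)\times Aut^0(F)=G\times Aut^0(F),
\end{equation*}
where I use $Aut^0(G/B)=G$ (see \cite{Dem aut}). Passing to Lie algebras and using $\mathrm{Lie}(Aut^0(F))=H^0(F,\Theta_F)$ (see \cite[Lemma 3.4, p.13]{MO}), the assumption $H^0(F,\Theta_F)\neq 0$ forces $\dim Aut^0(F)\ge 1$. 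Hence $\dim Aut^0(E)=\dim G+\dim Aut^0(F)>\dim G$, contradicting $\dim Aut^0(E)=\dim G$ from the previous step. Therefore no such isomorphism exists.

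For the final ``in particular'' assertion I would invoke the Observation preceding Proposition \ref{prop 3.1}: if the $B$-action on $F$ extended to a $G$-action, then the map $\psi$ constructed there would give a ($G$-equivariant, in particular abstract) isomorphism $E\simeq G/B\times F$, which has just been ruled out. Thus the $B$-action on $F$ cannot be extended to an action of $G$.

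The computation itself is short once the ingredients are assembled; the step requiring the most care is the product decomposition of $Aut^0$. I would need to verify that the hypotheses of \cite[Corollary 2.3]{Bri} (properness and the condition $H^0(\mathcal{O})=\mathbb{C}$ on each factor, which forces connected fibres) are genuinely met, and to note that an \emph{abstract} isomorphism of varieties already induces an isomorphism of the algebraic groups $Aut^0$, so that the dimension comparison is legitimate even though the isomorphism appearing in the Observation is a priori only $G$-equivariant. Everything else is a direct bookkeeping of dimensions.
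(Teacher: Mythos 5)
Your proposal is correct and follows essentially the same route as the paper's own proof: both argue by contradiction, applying \cite[Corollary 2.3, p.46]{Bri} to get $Aut^0(G/B\times F)=G\times Aut^0(F)$, invoking \cite[Lemma 3.4, p.13]{MO} to see that $H^0(F,\Theta_F)\neq 0$ makes $Aut^0(F)$ nontrivial, and contradicting $Aut^0(E)=G$ from Theorem \ref{thm 3.3}(i). Your dimension count and the explicit appeal to the Observation for the ``in particular'' clause are only minor elaborations of the argument the paper gives.
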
 
	\begin{proof}
		If $E\simeq G/B\times F,$ then by \cite[Corollary 2.3, p.46]{Bri},  ${\rm Aut}^0(E)={\rm Aut}^0(G/B)\times {\rm Aut}^0(F).$ Since ${\rm Aut}^0(G/B)=G$ (see \cite{Dem aut}), we have ${\rm Aut}^0(E)= G \times {\rm Aut}^0(F).$ Further, since $H^0(F, \Theta_{F})\neq 0,$ by \cite[Lemma 3.4, p.13]{MO}, we conclude that ${\rm Aut}^0(F)$ is not a trivial group. Therefore, ${\rm Aut}^0(E)\neq G,$ which shows contradiction to Theorem \ref{thm 3.3}(i).
	\end{proof}

	{Note:}\label{Note} For any $G$-induced variety $E,$ we have the following observations from the proof of Theorem \ref{thm 3.3}.

	\begin{itemize}
		\item[(1)] Let $\pi_{*}: {\rm Aut}^0(E)\longrightarrow G$ be as in Proposition \ref{prop 3.1}. Then we have Lie($\ker\pi_*)=H^0(E,\mathcal{R})=H^0(G/B,H^0(F,\Theta_{F})).$	
	\end{itemize}
	\begin{itemize}
		\item[(2)]   Lie(${\rm Aut}^0(E)$) fits into the exact sequence 
		\begin{equation*}
			0\rightarrow H^0(E,\mathcal{R})\rightarrow H^0(E, \Theta_{E})\rightarrow \mathfrak{g} \rightarrow 0
		\end{equation*}
		of $G$-modules. Since there is an embedding $\mathfrak{g}\hookrightarrow H^0(E, \Theta_{E})$ coming from the faithful action of $G$ on $E$, the above exact sequence splits, i.e., $H^0(E, \Theta_{E})=H^0(E, \mathcal{R})\oplus \mathfrak{g}$ as $G$-modules. 
	\end{itemize}

	\section{$G$-Schubert variety and $G$-BSDH-variety}
	Throughout this section we assume $G$ to be a simply-laced simple algebraic group of adjoint type. In this section we study the connected component containing the identity automorphism of the group of all algebraic automorphisms of a $G$-Schubert variety and $G$-BSDH-variety.
	
	\subsection{Identity component of the automorphism group of a $G$-Schubert variety:}
	We recall some results on automorphism group of a Schubert variety from \cite{Kan}. 
	
	Recall that for $w$ in $W$ the Schubert variety in $G/B$ associated to $w$ is usually denoted by $X(w)$ defined as \[ X(w):=\overline{BwB/B}\subset G/B.\]
	
	For the left action of $G$ on $G/B,$ let $P={\text{Stab}}_{G}(X(w))$ denote the stabilizer of $X(w)$ in $G.$ Since $B\subset {\rm Stab}_{G}(X(w))$, $P$ is a parabolic subgroup of $G$. Further since $P$ contains $B,$ it is a standard parabolic subgroup of $G$ of the form $P_{I(w)}$ for some subset $I(w)$ of $S$. This subset $I(w)$ of $S$ is precisely consisting of $\alpha\in S$ such that $w^{-1}(\alpha)<0$, i.e., $w^{-1}(\alpha)$ is a negative root.  
	Since $P_{I(w)}$ is connected, the natural left action of $P_{I(w)}$ on $X(w),$ induces a map \[\varphi_{w}: P_{I(w)}\longrightarrow {\rm Aut}^0(X(w)).\] Let $\alpha_{0}$ denote the highest root of $G$ with respect to $T$ and $B^{+}.$  Let $\mathfrak{p}_{I(w)}$ denote the parabolic Lie subalgebra of $\mathfrak{g}$. Then $\mathfrak{p}_{I(w)}$ is the Lie algebra of $P_{I(w)}$.
	\begin{theorem}\label{Thm 4.1}
	The map $\varphi_{w}$ is a surjective homomorphism of algebraic groups.
	\end{theorem} 
  Theorem \ref{Thm 4.1} is stated in \cite[Theorem 4.2(1), p.772]{Kan} for a smooth Schubert variety, but proof goes for any Schubert variety. Here we give a brief sketch of the proof.
	\begin{proof}
		Recall from \cite[Lemma 3.4, p.13]{MO} that Lie(${\rm Aut}^0(X(w))$)= $H^0(X(w),\Theta_{X(w)}).$
		To prove $\varphi_{w}$ is surjective it is enough to prove that $d\varphi_{w}: \mathfrak{p}_{I(w)}\longrightarrow H^0(X(w), \Theta_{X(w)})$ is
		surjective.
		
		Let $\Theta_{G/B}$ be the tangent sheaf of $G/B.$ Then note that $\Theta_{G/B}$ is the sheaf corresponding to the tangent bundle $\mathcal{L}(\mathfrak{g/b})$ of $G/B.$ Further, we
		have $H^0(X(w), \Theta_{X(w)})\subseteq H^0(X(w), \Theta_{G/B}|_{X(w)})= H^0(w,\mathfrak{g/b}).$
		
		By \cite[Lemma 3.5, p.770]{Kan}, the restriction map $H^0(G/B, \mathfrak{g/b})\longrightarrow H^0(w,\mathfrak{g/b})$ is surjective.
		Thus for $D'\in H^0(X(w), \Theta_{X(w)})\subseteq H^0(w, \Theta_{G/B}|_{X(w)})= H^0(w, \mathfrak{g/b}),$ there exists $D\in H^0(G/B, \Theta_{G/B})$ such that image under the restriction map is $D'.$  Consequently, $D$ preserves the ideal sheaf of $X(w)$ in $G/B,$ and hence $D\in$ Lie$(\text{Stab}_G(X(w)))=\mathfrak{p}_{I(w)}.$ Therefore, proof of the lemma follows.
	\end{proof} 
	
	We recall some definitions and facts which we use later (see \cite{Bot},\cite{Akh},\cite{Snow}).

	Let $\lambda \in X(T).$ Then $\lambda$ is called singular if $\langle \lambda, \alpha \rangle = 0$ for some $\alpha \in R^{+},$ otherwise it is called non-singular. 
	
	The index of $\lambda$ is defined to be ind$(\lambda):=min\{\ell(w)|w(\lambda)\in X(T)^{+}\}.$ 
	
	Fact 1. If $\beta\in R$ is such that $\beta+\rho$ is non-singular, then either $\beta=\alpha_{0}$ or $\beta$ is the negative of a simple root.
	
	Fact 2. If $\beta \in R$ is such that $\beta+\rho$ is non-singular, then index of $\beta+\rho$ is either $0$ or $1$ (see \cite[p.47-48]{Snow}). Further, if the index of $\beta+\rho$ is $0$ (respectively,
	1), then $\beta= \alpha_{0}$ (respectively, $\beta$ is the negative of a simple root).
     
     We use the following version of Bott's theorem on vanishing of cohomology of homogeneous vector bundles, a proof of whose can be found in \cite[Theorem 1, p.129]{Gri}

   Let $P$ be a parabolic subgroup of $G$ containing $B$. Let $V$ be a $P$-module. Then $V$ has a filtration \[0=V_{0}\subset V_1\subset V_2\subset \ldots \subset V_{t}=V\] by $P$-submodules such that $V_{i}/V_{i-1}$ are irreducible $P$-modules of highest weight $\lambda_i$ ($1\le i\le t$). We call these weights the highest weights of $V$ and denote the set of weights by $\Lambda_{P}(V):=\{\lambda_i: 1\le i\le t\}$. We note here that although the filtrations are not unique but the set $\Lambda_{P}(V)$ of weights is uniquely determined by the decomposition of $V$ into direct sum of irreducible components with respect to $L$-modules, where $L$ denotes the Levi factor of $P$. Let $I_{P}(V):=\{\text{ind}(\lambda_i+\rho): \lambda_i+\rho ~\text{is non-singular~ and~} \lambda_i ~\in \Lambda_{P}(V)\}$.
    \begin{lemma}\label{lem 4.4}
   Let $\mathcal{L}_{P}(V)$ be the homogeneous vector bundle on $G/P$ associated to a $P$-module $V$.
     Then we have the following: \[H^{j}(G/P,\mathcal{L}_{P}(V))=0 ~\text{if }~ j\notin I_{P}(V).\]
    \end{lemma}
    Here we recall the description of the kernel of the map $\varphi_{w}$ from \cite[Corollary 4.3, p.774]{Kan}.
    Let ${\rm Supp}(w):=\{\alpha\in S: s_{\alpha}\le w\}$, and let $T(w)=\bigcap_{\alpha\in {\rm Supp}(w)}\ker(\alpha)$. For $w\in W,$ let $R^{+}(w^{-1}):=\{\beta\in R^{+}: w^{-1}(\beta)<0\}$. For $\beta \in R$, let $U_{\beta}$ denote the root subgroup of $G$ associated to $\beta$. Let $U_{\le w}$ be the root subgroup of $U$ (the unipotent radical of $B$) generated by \[\langle U_{-\beta}:\beta\in R^{+}\setminus \bigcup_{v\le w}R^{+}(v^{-1})\big\rangle\]

    Let $K_{w}:=\ker\varphi_{w}$. Let $\mathfrak{k}_{w}$ denote the Lie algebra of $K_{w}$. Then
    \begin{lemma}\label{lemma 4.4}
    $K_{w}$ $($ respectively, $\mathfrak{k}_{w})$ is generated by $T(w)$ $($ respectively, ${\rm Lie}(T(w)))$ and $U_{\le w}$ $($respectively, ${\rm Lie}(U_{\le w})).$
    \end{lemma}
   
   \begin{lemma}\label{lem 4.3}
    Assume that $w\in W$ is such that $w\neq w_0.$ Then $H^j(G/B, \mathfrak{p}_{I(w)})=0$ for all $j\ge 0$.
    \end{lemma}
    \begin{proof} Consider the short exact sequence
    	\[0\to \mathfrak{b}\to \mathfrak{p}_{I(w)}\to \mathfrak{p}_{I(w)}/\mathfrak{b}\to 0\]  of $B$-modules. Therefore we have the following long exact sequence 
    	\[0\to H^0(G/B,\mathfrak{b})\to H^0(G/B,\mathfrak{p}_{I(w)})\to H^0(G/B, \mathfrak{p}_{I(w)}/\mathfrak{b})\to \]\[ H^1(G/B,\mathfrak{b})\to H^1(G/B,\mathfrak{p}_{I(w)})\to H^1(G/B, \mathfrak{p}_{I(w)}/\mathfrak{b})\to \cdots\]
    	of $G$-modules.
    By using \cite[Lemma 3.4, Theorem 4.1, p.770-771]{Kan}, we have $H^j(G/B, \mathfrak{b})=0$ for $j\ge 0$. Thus by the above exact sequence we have the following:
    \[H^j(G/B,\mathfrak{p}_{I(w)})=H^j(G/B,\mathfrak{p}_{I(w)}/\mathfrak{b})~ \text{ for all}~j\ge 0.\] Note that $\mathfrak{p}_{I(w)}/\mathfrak{b}$ is a $B$-module such that the weights appearing in $\mathfrak{p}_{I(w)}/\mathfrak{b}$ are positive roots. Further since $\mathfrak{p}_{I(w)}\neq\mathfrak{g}$, as $w\neq w_{0}$, the highest root $\alpha_{0}$ does not appear in $\mathfrak{p}_{I(w)}/\mathfrak{b}$. Therefore, by using Fact 2 and Lemma \ref{lem 4.4} proof follows.
    \end{proof}

	Now we prove
	\begin{lemma}\label{lemma 4.2}
		Assume that $w\neq w_{0} \in W.$ Then we have $H^j(G/B, H^0(X(w), \Theta_{X(w)}))=0$
		for all $j\ge 0.$
	\end{lemma}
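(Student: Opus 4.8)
The plan is to reduce the statement $H^j(G/B, H^0(X(w), \Theta_{X(w)}))=0$ to a concrete computation on the $B$-module $H^0(X(w), \Theta_{X(w)})$, and then to control that module via its relation to $\mathfrak{g}/\mathfrak{b}$ using the surjectivity furnished by Lemma \ref{lemma 4.1}. First I would note that, as established in the proof of Lemma \ref{lemma 4.1}, there is an inclusion of $B$-modules $H^0(X(w), \Theta_{X(w)}) \subseteq H^0(w, \mathfrak{g}/\mathfrak{b})$, and in fact the image of $\mathfrak{p}_{I(w)}$ under $d\varphi_w$ is all of $H^0(X(w),\Theta_{X(w)})$. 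So the $B$-module $H^0(X(w),\Theta_{X(w)})$ is a quotient (equivalently a sub, via the surjection $\mathfrak{p}_{I(w)} \twoheadrightarrow H^0(X(w),\Theta_{X(w)})$) built out of the adjoint-type pieces of $\mathfrak{g}$. The strategy is therefore to filter $\mathfrak{g}/\mathfrak{b}$ (or $\mathfrak{p}_{I(w)}$) by $B$-submodules whose successive quotients are one-dimensional modules $\mathbb{C}_{\beta}$ with $\beta$ ranging over the roots $\beta \in R$ appearing in $\mathfrak{g}/\mathfrak{b}$, namely $\beta > 0$ (the root spaces $\mathfrak{g}_{\beta}$ for positive $\beta$, since $\mathfrak{g}/\mathfrak{b} \cong \bigoplus_{\beta>0}\mathfrak{g}_{\beta}$ as $T$-modules).

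Next, since cohomology $H^j(G/B, -)$ is computed from a long exact sequence associated to each short exact sequence in such a filtration, it suffices to show that $H^j(G/B, \mathbb{C}_{\beta}) = 0$ for all $j \ge 0$ and for every root $\beta$ that actually occurs in the relevant $B$-module. Here is where the simply-laced hypothesis and the combinatorial Facts enter. By the Borel--Weil--Bott theorem, $H^j(G/B, \mathbb{C}_{\beta})$ vanishes for all $j$ precisely when $\beta + \rho$ is singular, i.e. $\langle \beta+\rho, \gamma\rangle = 0$ for some $\gamma \in R^+$. So I would aim to show that for every root $\beta$ occurring in $H^0(X(w),\Theta_{X(w)})$ the weight $\beta+\rho$ is singular. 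By Fact 1, the only roots $\beta$ for which $\beta+\rho$ is \emph{non-singular} are $\beta = \alpha_0$ (the highest root) or $\beta = -\alpha_i$ (negative of a simple root). The crux of the argument is therefore to rule out these exceptional roots: I must show that neither $\alpha_0$ nor any $-\alpha_i$ can appear as a $T$-weight of $H^0(X(w),\Theta_{X(w)})$ when $w \neq w_0$.

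The hard part will be exactly this exclusion of the non-singular roots, and it is where the hypothesis $w \neq w_0$ must be used decisively. The weight $\alpha_0$ is a positive root and sits inside $\mathfrak{g}/\mathfrak{b}$, so I expect the point to be that the corresponding global section, which exists on the full flag variety $G/B$ (where $w=w_0$ and $X(w_0)=G/B$), fails to survive to the proper Schubert variety $X(w)$: the tangent field along the $\alpha_0$-direction is not tangent to $X(w)$ once $X(w) \subsetneq G/B$. Concretely, I would analyze which weights $\beta$ actually occur in $H^0(w,\mathfrak{g}/\mathfrak{b})$ and then intersect with the stabilizer condition $\mathfrak{p}_{I(w)}$ (equivalently use that $H^0(X(w),\Theta_{X(w)})$ is the image of $\mathfrak{p}_{I(w)}$), checking that $w \neq w_0$ forces $I(w) \subsetneq S$, so that $\alpha_0$ — whose presence in $\mathfrak{p}_{I(w)}$ would require all simple roots — is excluded, and that the negative simple roots $-\alpha_i$ present in $\mathfrak{p}_{I(w)}$ lie in the Levi part, whose contribution to $H^0(X(w),\Theta_{X(w)})$ must be examined separately and shown to carry singular $\beta+\rho$ or to cancel. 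Once every occurring $\beta$ is shown to have $\beta+\rho$ singular, Borel--Weil--Bott gives $H^j(G/B,\mathbb{C}_\beta)=0$ for all $j$, and running the filtration's long exact sequences yields $H^j(G/B, H^0(X(w),\Theta_{X(w)}))=0$ for all $j \ge 0$, completing the proof.
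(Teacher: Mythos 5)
Your reduction---filter the relevant $B$-module so that the successive quotients are one-dimensional weight spaces $\mathbb{C}_{\beta}$, and then show each occurring weight $\beta$ has $\beta+\rho$ singular---cannot be carried out, because the weights you would need to exclude genuinely occur in $H^0(X(w),\Theta_{X(w)})$. Take $G$ of rank at least $2$ and $w=s_{\alpha}$ a simple reflection (so $w\neq w_{0}$). Then $X(w)\simeq \mathbb{P}^{1}$, $Aut^0(X(w))=PGL_{2}$, and by Lemma \ref{lemma 4.1} the module $H^0(X(w),\Theta_{X(w)})$ is a copy of $\mathfrak{pgl}_{2}$ whose $T$-weights are $\alpha$, $0$ and $-\alpha$. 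So a negative simple root does appear, and so does the weight $0$ (coming from the Cartan part of $\mathfrak{p}_{I(w)}$). Note also that Fact 1 only classifies \emph{roots} $\beta$ with $\beta+\rho$ non-singular; the zero weight is not a root, yet $0+\rho=\rho$ is non-singular and dominant, so $H^0(G/B,\mathbb{C}_{0})=\mathbb{C}\neq 0$. Likewise $-\alpha+\rho=s_{\alpha}(\rho)$ is non-singular of index $1$, so $H^1(G/B,\mathbb{C}_{-\alpha})\neq 0$. Since the graded pieces of any such filtration have non-vanishing cohomology, the long exact sequences only bound $H^{j}(G/B,-)$ from above by the cohomology of the pieces; they cannot yield the asserted vanishing. (There is also a preliminary confusion: filtering $\mathfrak{g}/\mathfrak{b}$ computes $H^{j}(G/B,\Theta_{G/B})$, not the cohomology of the $B$-module $H^0(X(w),\Theta_{X(w)})$, which is a quotient of $\mathfrak{p}_{I(w)}$ and has zero and negative weights.)

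The escape hatch you mention in passing (``or to cancel'') is in fact the entire content of the paper's proof, and it is invisible to any argument that passes to the associated graded: one must retain the non-split $B$-module extension structure. The paper first uses the exact sequence $0\to\mathcal{K}_{w}\to\mathfrak{p}_{I(w)}\to H^0(X(w),\Theta_{X(w)})\to 0$ together with Kannan's vanishing $H^{j}(G/B,\mathfrak{p}_{I(w)})=0$ for all $j\ge 0$ (this is where $w\neq w_{0}$, equivalently $I(w)\subsetneq S$, enters---your use of $w\neq w_{0}$ to exclude $\alpha_{0}$ from $\mathfrak{p}_{I(w)}$ is correct but is only a small part of the argument) to shift the problem to $H^{j+1}(G/B,\mathcal{K}_{w})$. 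It then fibers $G/B$ over $G/P_{J}$, where $J$ records the simple roots $\alpha$ with $(\mathcal{K}_{w})_{-\alpha}\neq 0$, and along the fibers applies the structure theory of indecomposable $B_{\alpha}$-modules: by Lemma \ref{lemma 2.4}, $\mathbb{C}h(\alpha)\oplus\mathbb{C}_{-\alpha}$ is an indecomposable $B_{\alpha}$-module isomorphic to $V\otimes\mathbb{C}_{-\omega_{\alpha}}$, and by Lemma \ref{lemma 2.3}(4) its $L_{\alpha}/B_{\alpha}$-cohomology vanishes in \emph{all} degrees: the zero weight and the negative simple root annihilate each other precisely because the extension is non-split. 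Only after this step do the surviving weights of the pushed-forward module fall under Fact 2 and the Manivel--Snow lemma, giving vanishing on $G/P_{J}$ in degrees $\ge 1$, which suffices because of the initial degree shift. Without this mechanism (or an equivalent one), your proposal has a genuine gap at its central step, and the statement it would need---that every weight occurring in $H^0(X(w),\Theta_{X(w)})$ has singular $\beta+\rho$---is false.
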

	\begin{proof}
	By Theorem \ref{Thm 4.1}, we have the following short exact sequence
		\begin{equation}\label{eq4.1}
			0\longrightarrow \mathfrak{k}_{w}\longrightarrow \mathfrak{p}_{I(w)}\longrightarrow H^0(X(w), \Theta_{X(w)})\longrightarrow 0
		\end{equation}
		of $B$-modules.
		
	   Since $w\neq w_{0},$ by Lemma \ref{lem 4.3} we have $H^j(G/B, \mathfrak{p}_{I(w)})=0$ for $j\ge 0.$ Therefore, from the long exact sequence associated to \eqref{eq4.1} we have the following: \[H^j(G/B, H^0(X(w), \Theta_{X(w)}))=H^{j+1}(G/B, \mathfrak{k}_{w})~\text{for}~ j\ge 0.\]
		
		By Lemma \ref{lemma 4.4} we have \[\mathfrak{k}_{w}=\text{Lie}(T(w))\oplus \bigoplus\limits_{\beta\in A}\mathbb{C}_{-\beta} .\] where $A=R^{+} \setminus \bigcup_{v\le w} R^{+}(v^{-1}).$

		Assume that $\text{Supp}(w)=S,$ then $T(w)$ is a trivial group and by the above description $(\mathfrak{k}_{w})_{-\alpha}=0$ for all $\alpha\in S$. Therefore, by using Fact 2 and Lemma \ref{lem 4.4} we conclude that $H^{j}(G/B, \mathfrak{k}_{w})=0$ for $j\ge 0$. Hence $H^j(G/B, H^0(X(w),\Theta_{X(w)}))=0$ for all $j\ge 0$.

		Assume that $\text{Supp}(w)\subset S$. Define $J^{c}:=S\setminus \text{Supp}(w)$. Note that $J^{c}\subset A.$ Let $L_{J^c}$ be the Levi subgroup of $P_{J^c}$, and $B_{J^c}=B\cap L_{J^c}$. Then we have $P_{J^c}/B \simeq L_{J^c}/B_{J^c}.$  Note that $P_{J^c}/B=X(w_{0,J^c})$, where $w_{0,J^c}$ denotes the longest element of $W_{J^{c}}$. Let $r=|J^c|$.  Let $c=s_{i_{r}}\cdots s_{i_{1}}$ be a reduced expression such that $\alpha_{i_{j}}\neq \alpha_{i_{k}}$ for $1\le j\ne k\le r$, i.e., $c$ is a Coxter element of $W_{J^c}$. Now extend this reduced expression of $c$ to a reduced expression $w_{0,J^c}=s_{i_{N}}\cdots s_{i_{r+1}}s_{i_{r}}s_{i_{r-1}}\cdots s_{i_{1}}$ to compute \[H^j(L_{J^c}/B_{J^c},\mathfrak{k}_{w})~\text{ for}~ j\ge 0,\] where $\ell(w_{0,J^c})=N$. 
	    
	    Note that since $G$ is simply-laced, we have $\langle -\beta,\alpha\rangle=-1,0, ~\text{or}~ 1$ for any pair $\alpha, \beta$ in $R$ such that $\alpha\neq \pm \beta.$ Hence if $\langle-\beta,\alpha\rangle=1$, then $-\beta+\alpha$ is not a root. Similarly, if $\langle-\beta,\alpha\rangle=-1$, then $-\beta-\alpha$ is not a root.
		Note that if  $\beta$ in $A$ is such that $\langle -\beta, \alpha_{i_{1}}\rangle=-1$, then clearly $\beta-\alpha_{i_{1}}$ is a positive root. 
		
		Therefore by the above description of $\mathfrak{k}_{w}$, the indecomposable $\widetilde{B}_{\alpha_{i_{1}}}$-summands of $\mathfrak{k}_{w}$ are the following: \[ \mathbb{C}h(\alpha_{i_1})\oplus \mathbb{C}_{-\alpha_{i_1}} ; \mathbb{C}h(\alpha) ~(\alpha\neq \alpha_{i_{1}})~; \mathbb{C}_{-\beta}~(\text{ for}~ \langle \beta, \alpha_{i_1}\rangle=0 );\] \[~\mathbb{C}_{-\beta}\oplus \mathbb{C}_{-\beta-\alpha_{i_{1}}}~(\text{ for }~\langle -\beta, \alpha_{i_1}\rangle=1); \] \[ \mathbb{C}_{-\beta}~ (\text{ for} \langle -\beta, \alpha_{i_1}\rangle=-1 ~\text{such that}~ \beta-\alpha_{i_1}\notin A).\]

		By Lemma \ref{lemma 2.4} we have the following: \[ \mathbb{C}h(\alpha_{i_1})\oplus \mathbb{C}_{-\alpha_{i_1}}=V(1)\otimes \mathbb{C}_{-\omega_{i_{1}}};\]
		\[\mathbb{C}h(\alpha)=V(0) ~(\alpha\neq \alpha_{i_{1}});\]
		\[\mathbb{C}_{-\beta}=V(0) ~(\text{ for }~\langle \beta, \alpha_{i_1}\rangle=0 );\] \[~\mathbb{C}_{-\beta}\oplus \mathbb{C}_{-\beta-\alpha_{i_{1}}}=V(1)~(\text{ for } \langle -\beta, \alpha_{i_1}\rangle=1); \] \[~\mathbb{C}_{-\beta}=V(0)\otimes \mathbb{C}_{-\omega_{i_{1}}}~(\text{ for } \langle -\beta, \alpha_{i_1}\rangle=-1 \text{~such that } \beta-\alpha_{i_{1}}\notin A);\] 
		where $V(i)$ denotes an $i+1$-dimensional irreducible $\widetilde{L}_{\alpha_{i_{1}}}$-module. Therefore by using Lemma \ref{lemma 2.3} we have the following:
		\[H^0(s_{i_{1}},\mathfrak{k}_{w})=\bigoplus\limits_{\alpha\in J^{c}\setminus \{\alpha_{i_{1}}\}}\mathbb{C}h(\alpha) \bigoplus\limits_{\begin{matrix}
			\beta \in A\setminus\{\alpha_{i_{1}}\}\\
			\text{except those with} \\ \langle \beta,\alpha_{i_1} \rangle=1 ~\text{and }~ \beta-\alpha_{i_{1}}\notin A
			\end{matrix} }\mathbb{C}_{-\beta}\]  and
		 	\[H^j(s_{i_{1}},\mathfrak{k}_{w})=0~  \text{for all}~ j\ge 1.\]
	Similarly, by proceeding recursively for the string $s_{i_{r}}\cdots s_{i_{2}}$, we conclude that zero weights, and negatives of simple roots do not occur in $H^0(s_{i_{r}}\cdots s_{i_{1}},\mathfrak{k}_{w})$ and $H^j(s_{i_{r}}\cdots s_{i_{1}},\mathfrak{k}_{w})=0$ for all $j\ge 1.$
       
       Therefore proceeding successively for the string $s_{i_{N}}\cdots s_{i_{r+1}}$, we conclude that zero weights, and negatives of simple roots do not occur in $H^0(L_{J^c}/B_{J^c},\mathfrak{k}_{w}),$ and $H^j(L_{J^c}/B_{J^c}, \mathfrak{k}_{w})=0$ for all $j\ge 1.$

		Consider the natural projection map \[p: G/B \longrightarrow G/P_{J^c}.\] Since $H^j(P_{J^c}/B,\mathfrak{k}_{w})=0$ for all $j\ge 1$,  $R^{j}p_{*}\mathcal{L}(\mathfrak{k}_{w})=0$ for $j\ge 1$. Therefore by an application of a degenerate case of the Leray spectral sequence we have \[H^j(G/B, \mathfrak{k}_{w})=H^j(G/P_{J^c}, H^0(P_{J^c}/B,\mathfrak{k}_{w}))~\text{for ~all}~ j\ge 1.\]
		
		Since $H^0(P_{J^c}/B, \mathfrak{k}_{w})$ is a $B$-module whose weights are among the roots other than negative of simple roots, by using Fact 2, and Lemma \ref{lem 4.4} we have $H^j(G/P_{J^c}, H^0(P_{J^c}/B, \mathfrak{k}_{w}))= 0$ for $j\ge 1.$ Therefore, $H^j(G/B, H^0(X(w), \Theta_{X(w)})) = 0$ for all $j\ge 0.$
	\end{proof}
	
	\begin{proposition}\label{prop 4.3}
		Assume that $w\neq w_{0} \in W.$ Then we have
		\begin{itemize}
			\item [(i)] ${\rm Aut}^0(\mathcal{X}(w))=G.$
			
			\item[(ii)] $H^1(\mathcal{X}(w), \Theta_{\mathcal{X}(w)})=H^0(G/B, H^1(X(w), \Theta_{X(w)})).$
		\end{itemize}
	\end{proposition}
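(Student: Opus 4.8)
The plan is to apply Theorem~\ref{thm 3.3} directly to the special case $F = X(w)$, $E = \mathcal{X}(w)$, reducing both statements of the proposition to cohomological vanishing facts that are supplied by Lemma~\ref{lemma 4.2}. The key observation is that Theorem~\ref{thm 3.3} has hypotheses phrased entirely in terms of the cohomology modules $H^j(G/B, H^0(F,\Theta_F))$ and $H^j(F,\mathcal{O}_F)$, and Lemma~\ref{lemma 4.2} has just established exactly the vanishing $H^j(G/B, H^0(X(w),\Theta_{X(w)})) = 0$ for \emph{all} $j \ge 0$, in particular for $j = 0, 1, 2$.

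First I would verify the hypotheses for part (i). Since $w \neq w_0$, Lemma~\ref{lemma 4.2} gives $H^0(G/B, H^0(X(w),\Theta_{X(w)})) = 0$, which is precisely the condition in Theorem~\ref{thm 3.3}(i). One needs $X(w)$ to be an irreducible projective $B$-variety, which holds since Schubert varieties are irreducible and projective and are stable under the left $B$-action (indeed under $P_{I(w)}$). Applying Theorem~\ref{thm 3.3}(i) with $F = X(w)$ then yields $Aut^0(\mathcal{X}(w)) = G$, establishing (i).

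Next I would turn to part (ii), which invokes Theorem~\ref{thm 3.3}(ii). That result requires two things: the vanishing $H^j(X(w),\mathcal{O}_{X(w)}) = 0$ for all $j \ge 1$, and the vanishing $H^j(G/B, H^0(X(w),\Theta_{X(w)})) = 0$ for $j = 1, 2$. The latter is immediate from Lemma~\ref{lemma 4.2}. For the former, I would appeal to the standard fact that Schubert varieties have rational singularities and satisfy $H^j(X(w),\mathcal{O}_{X(w)}) = 0$ for all $j \ge 1$ (they are cohomologically trivial in this sense; see for instance the treatment of Schubert varieties in \cite{BK}). With both hypotheses in hand, Theorem~\ref{thm 3.3}(ii) gives $H^1(\mathcal{X}(w), \Theta_{\mathcal{X}(w)}) = H^0(G/B, H^1(X(w), \Theta_{X(w)}))$, which is exactly (ii).

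The proof is therefore essentially a matter of checking that the hypotheses of Theorem~\ref{thm 3.3} are met, and the genuine content has already been absorbed into Lemma~\ref{lemma 4.2}. The main obstacle, such as it is, is the verification that $H^j(X(w),\mathcal{O}_{X(w)}) = 0$ for $j \ge 1$; this is where the simply-laced hypothesis and the structure theory of Schubert varieties enter, although this vanishing is actually a characteristic-free result valid for all Schubert varieties and does not itself need the simply-laced assumption. The simply-laced hypothesis is truly needed only inside Lemma~\ref{lemma 4.2} (through Fact~1 and Fact~2), so once that lemma is granted, the deduction of the proposition is formal.
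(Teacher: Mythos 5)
Your proposal is correct and follows essentially the same route as the paper: both parts are obtained by feeding the vanishing from Lemma~\ref{lemma 4.2} into Theorem~\ref{thm 3.3} with $F = X(w)$. In fact you are slightly more careful than the paper's own two-line proof, since you explicitly verify the hypothesis $H^j(X(w),\mathcal{O}_{X(w)})=0$ for $j\ge 1$ (standard for Schubert varieties, e.g.\ via \cite{BK}), which the paper uses implicitly without comment.
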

	\begin{proof}
		Proof of (i): By Lemma \ref{lemma 4.2}, we have $H^0(G/B, H^0(X(w), \Theta_{X(w)})) = 0.$ Therefore, by Theorem \ref{thm 3.3}(i), we have ${\rm Aut}^0(\mathcal{X}(w))= G.$
		
		Proof of (ii): By Lemma \ref{lemma 4.2}, we have $H^j(G/B, H^0(X(w), \Theta_{X(w)}))= 0$ for all $j\ge 1$. By using \cite[Theorem 3.1.1(a), p.84]{BK} it follows that $H^j(X(w),\mathcal{O}_{X(w)})=0$ for all $j>0.$ Therefore, by Theorem \ref{thm 3.3}(ii), we have $H^1(\mathcal{X}(w), \Theta_{\mathcal{X}(w)})= H^0(G/B, H^1(X(w), \Theta_{X(w)})).$
	\end{proof}
	
	\begin{corollary}
		Let $w\in W$ be such that $w\neq id, w_{0},$ where $id$ denotes the identity element of $W.$ Then $\mathcal{X}(w)$ is not isomorphic to $G/B \times X(w).$
	\end{corollary}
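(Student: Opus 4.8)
I would argue by contradiction, playing Proposition~\ref{prop 4.3}(i) against Brion's product decomposition of automorphism groups. Suppose $\mathcal{X}(w)\simeq G/B\times X(w)$. Then by \cite[Corollary 2.3, p.46]{Bri} we have $Aut^0(\mathcal{X}(w))=Aut^0(G/B)\times Aut^0(X(w))$, and since $Aut^0(G/B)=G$ this reads $Aut^0(\mathcal{X}(w))=G\times Aut^0(X(w))$. On the other hand, because $w\neq w_{0}$, Proposition~\ref{prop 4.3}(i) gives $Aut^0(\mathcal{X}(w))=G$. Comparing, it suffices to show that $Aut^0(X(w))$ is non-trivial whenever $w\neq id$, for then $G\times Aut^0(X(w))$ strictly contains $G$ and we reach a contradiction. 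Equivalently, one may simply invoke the corollary following Theorem~\ref{thm 3.3} with $F=X(w)$: its hypothesis $H^0(G/B,H^0(X(w),\Theta_{X(w)}))=0$ is Lemma~\ref{lemma 4.2} (this is where $w\neq w_{0}$ is used), and its hypothesis $H^0(X(w),\Theta_{X(w)})\neq 0$ is exactly the non-triviality just mentioned.

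The one point requiring a genuine argument is thus the non-vanishing $H^0(X(w),\Theta_{X(w)})\neq 0$, equivalently, by \cite[Lemma 3.4, p.13]{MO}, that $Aut^0(X(w))$ is a non-trivial algebraic group when $w\neq id$. To establish this I would use that $B$ stabilizes $X(w)=\overline{BwB/B}$ and so acts on it; since $B\subseteq P_{I(w)}$, this action is realized through $\varphi_{w}$ (Lemma~\ref{lemma 4.1}), and as $B$ is connected its image lies in $Aut^0(X(w))$. The action is non-trivial: the unique $B$-fixed point of $G/B$ is $eB$, because $B$ is self-normalizing in $G$, whereas $\dim X(w)=\ell(w)\geq 1$ for $w\neq id$, so $X(w)$ contains points distinct from $eB$ which cannot be $B$-fixed. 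Hence the image of $B$ in $Aut^0(X(w))$ is non-trivial, so $Aut^0(X(w))\neq\{e\}$ and $H^0(X(w),\Theta_{X(w)})\neq 0$.

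Combining the two steps completes the proof; the main obstacle is precisely the verification that $Aut^0(X(w))$ is non-trivial for $w\neq id$, everything else being a direct application of Proposition~\ref{prop 4.3}(i) and the product formula of \cite{Bri}. Both excluded cases are genuinely necessary, which I would note as a sanity check. For $w=id$, $X(id)$ is a single point, $Aut^0(X(id))$ is trivial, and indeed $\mathcal{X}(id)=G/B\simeq G/B\times X(id)$. For $w=w_{0}$, one has $X(w_{0})=G/B$, so $\mathcal{X}(w_{0})=G\times_{B}G/B\simeq G/B\times G/B=G/B\times X(w_{0})$ does decompose, consistent with the failure of Proposition~\ref{prop 4.3}(i) in this range.
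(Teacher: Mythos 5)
Your proposal is correct, and its skeleton is exactly the paper's: assume $\mathcal{X}(w)\simeq G/B\times X(w)$, apply \cite[Corollary 2.3, p.46]{Bri} together with $Aut^0(G/B)=G$ to get $Aut^0(\mathcal{X}(w))=G\times Aut^0(X(w))$, and contradict Proposition~\ref{prop 4.3}(i) (this is where $w\neq w_0$ enters) once $Aut^0(X(w))$ is known to be non-trivial for $w\neq id$. The one genuine difference is how that non-triviality is established. The paper cites \cite[Theorem 4.2(2), p.772]{Kan} --- a result stated there only for smooth Schubert varieties, so the citation tacitly relies on the paper's earlier remark that Kannan's proof goes through for arbitrary $X(w)$. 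You instead prove it from scratch: $B$ stabilizes $X(w)$, hence, being connected, maps into $Aut^0(X(w))$, and the action is non-trivial because the unique $B$-fixed point of $G/B$ is $eB$ (Borel subgroups are self-normalizing) while $\dim X(w)=\ell(w)\ge 1$ forces $X(w)$ to contain other points; this argument is sound and buys self-containedness, sidestepping the smoothness caveat in Kannan's statement, at the cost of a few extra lines. Your alternative packaging through the corollary following Theorem~\ref{thm 3.3} (with $F=X(w)$, hypotheses supplied by Lemma~\ref{lemma 4.2} and the same non-vanishing $H^0(X(w),\Theta_{X(w)})\neq 0$ via \cite[Lemma 3.4, p.13]{MO}) is equally valid and is essentially the same argument repackaged, since that corollary is itself proved by the identical Brion-plus-contradiction device. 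Your sanity checks that both exclusions $w\neq id$ and $w\neq w_0$ are necessary are also correct.
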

	\begin{proof}
		If $\mathcal{X}(w)=G/B \times X(w),$ then by \cite[Corollary 2.3, p.46]{Bri}, ${\rm Aut}^0(\mathcal{X}(w))={\rm Aut}^0(G/B)\times {\rm Aut}^0(X(w)).$ Since ${\rm Aut}^0(G/B)= G$ (see \cite{Dem aut}), we have ${\rm Aut}^0(\mathcal{X}(w))= G\times {\rm Aut}^0(X(w)).$ Further, since $w\neq id,$  $X(w)$ contains an open $B$-orbit of positive dimension, whence $B$ acts non-trivially on $X(w).$ Therefore, we conclude that ${\rm Aut}^0(X(w))$ is not a trivial group. Hence, ${\rm Aut}^0(\mathcal{X}(w))\neq G,$ which shows contradiction to Proposition \ref{prop 4.3}(i) as $w\neq w_{0}.$ 
	\end{proof}
	
	\begin{remark}
		If $w= w_{0},$ then we have $\mathcal{X}(w)= G/B\times G/B.$ Thus ${\rm Aut}^0(\mathcal{X}(w))=G\times G$ and $H^j(\mathcal{X}(w), \Theta_{\mathcal{X}(w)}) = 0$ $( j\ge 1)$ for $w=w_{0}.$ 
	\end{remark}
    
	\subsection{Identity component of the automorphism group of a $G$-BSDH variety:} 
	Let $w= s_{i_{1}}s_{i_{2}}\cdots s_{i_{r}}$ be a reduced expression and $\underline{i}:=(i_1,\ldots, i_r).$ Let $v= s_{i_{1}}s_{i_{2}}\cdots s_{i_{r-1}}$ and $\underline{i}':=(i_1,\ldots, i_{r-1}).$ 
	
	Consider the $P_{\alpha_{i_{r}}}/B$ ($\simeq \mathbb{P}^{1}$) fibration ~\[\pi_{r}: G/B\to G/P_{\alpha_{i_{r}}}.\] Then the fibre product $Z(w,\underline{i})$ is of $Z(v,\underline{i'})$ and $G/B$ over $G/P_{\alpha_{i_{r}}}$ with respect to the maps $\pi_{r}$ and $\pi_{r}\circ\phi_{v}: Z(v,\underline{i})\to G/P_{\alpha_{i_{r}}}$. Thus we have the following $B$-equivariant fibre product diagram: 	\[\begin{tikzcd}
	&Z(w,\underline{i}):=Z(v,\underline{i}')\times_{G/P_{\alpha_{i_{r}}}}G/B \arrow[r, "\phi_{w}"] \arrow[d,"f_{r}"'] &
	G/B  \arrow[d, "\pi_{r}"'] &&
	\\
	&Z(v,\underline{i}') \arrow[r, "\pi_{r}\circ\phi_{v}"]&
	G/P_{\alpha_{i_{r}}}\end{tikzcd}.\]
	
Notice that the $P_{\alpha_{i_{r}}}/B$-fibration $f_{r}: Z(w,\underline{i})\to Z(v,\underline{i'})$ is already defined earlier in Section \ref{sec2}. Note that the relative tangent bundle on $G/B$ with respect to $\pi_{r}$ is the line bundle $\mathcal{L}(\alpha_{i_{r}}).$ Therefore from the above $B$-equivariant fibre product diagram we have the following short exact sequence \[0\to \mathcal{L}(w,\alpha_{i_{r}})\to \Theta_{Z(w,\underline{i})}\to f_{r}^{*}\Theta_{Z(v,\underline{i'})}\to 0\] of $B$-equivariant vector bundles on $Z(w,\underline{i})$. Thus we have the following long exact sequence of \[0\to H^0(w,\alpha_{i_{r}})\to H^0(Z(w,\underline{i}), \Theta_{Z(w,\underline{i})})\to H^0(Z(v,\underline{i'}),\Theta_{Z(v,\underline{i'})})\to \]\[H^1(w,\alpha_{i_{r}})\to H^1(Z(w,\underline{i}), \Theta_{Z(w,\underline{i})})\to H^1(Z(v,\underline{i'}),\Theta_{Z(v,\underline{i'})})\to \cdots\] of $B$-modules.

\begin{lemma}\label{lemma 4.6}
Let $(w, \underline{i})$, $(v, \underline{i'})$ be as above. Then we have an exact sequence \[0\to H^0(w,\alpha_{i_{r}})\to H^0(Z(w,\underline{i}),\Theta_{Z(w,\underline{i})})\to H^0(Z(v,\underline{i'}),\Theta_{Z(v,\underline{i'})})\to 0\] of $B$-modules. Further $H^j(Z(w,\underline{i}), \Theta_{Z(w,\underline{i})})=0$ for all $j\ge 1.$
\end{lemma}
\begin{proof}
By \cite[Corollary 3.6, p.771]{Kan} it follows that $H^j(w,\alpha_{i_{r}})=0$ for all $j\ge 1$. Therefore by using the above long exact sequence a proof of the first part follows. Proof of the second part follows by induction on $r$ using the above long exact sequence and  \cite[Corollary 3.6, p.771]{Kan} (see \cite[Proposition 3.1(2), p.673]{CKP}).
\end{proof}

\begin{lemma}\label{lemma 4.10}
Let $w=s_{i_1}s_{i_{2}}\cdots s_{i_{r}}$ be a reduced expression and $\alpha_{i_{r}}=\alpha.$ Then we have the following: 
\begin{itemize}
	\item [(1)] If $\langle \alpha, \alpha_{i_{k}}\rangle=0$ for all $1\le k\le r-1,$ then the weights of the $B$-module $H^0(w,\alpha)$ are $\alpha,$ $0,$ $-\alpha$ with multiplicity one.
	
	\item [(2)] If $\alpha_{i_{k}}\neq \alpha~ (1\le k\le r-1),$ but there exists an integer $1\le k\le r-1$ such that $\langle \alpha,\alpha_{i_{k}}\rangle=-1,$ then the weights of the $B$-module $H^0(w,\alpha)$ are $0,$ $-\alpha,$ or among the negative roots other than negatives of simple roots. Further, the multiplicity of each weight is one.
	
	\item [(3)] If $\alpha_{i_{k}}= \alpha$ for some $1\le k\le r-2,$  then the weights of the $B$-module $H^0(w,\alpha)$ are among the  negative roots other than negatives of simple roots with multiplicity one.
\end{itemize}
\end{lemma}
\begin{proof}
	
 Proof of (1): By using Borel-Weil-Bott we have \[H^0(s_{i_{r}},\alpha)=\mathbb{C}_{\alpha}\oplus\mathbb{C}_{0}\oplus \mathbb{C}_{-\alpha}.\]
Since $\langle \alpha, \alpha_{i_{k}}\rangle=0$ for all $1\le k\le r-1,$ by using SES, Lemma \ref{lemma 2.4}, and Lemma \ref{lemma 2.3} we have \[H^0(w,\alpha)=\mathbb{C}_{\alpha}\oplus\mathbb{C}_{0}\oplus \mathbb{C}_{-\alpha}.\] Therefore (1) follows.

Proof of (2): Assume that $1\le m\le r-1$ is the largest integer such that $\langle\alpha, \alpha_{i_m} \rangle=-1.$ Then by the assumption $\langle \alpha, \alpha_{i_k} \rangle=0$ for all $m+1 \le k\le r-1.$
Therefore by (1) we have \[H^0(s_{i_{m+1}}\cdots s_{i_{r}},\alpha)=\mathbb{C}_{\alpha}\oplus \mathbb{C}_{0}\oplus \mathbb{C}_{-\alpha}.\] 
By using Lemma \ref{lemma 2.4} the indecomposable $\widetilde{B}_{\alpha_{i_{m}}}$-summands of $H^0(s_{i_{m+1}}\cdots s_{i_{r}},\alpha)$ are the following \[\mathbb{C}_{\alpha}=V(0)\otimes \mathbb{C}_{-\omega_{i_{m}}};\]\[\mathbb{C}_{0}=V(0);\] \[\mathbb{C}_{-\alpha}=V(0)\otimes \mathbb{C}_{\omega_{i_{m}}}.\] 
Then by using SES and Lemma \ref{lemma 2.3} we have  \[H^0(s_{i_{m}}\cdots s_{i_{r}},\alpha)=\mathbb{C}_{0}\oplus \mathbb{C}_{-\alpha}\oplus \mathbb{C}_{-\alpha-\alpha_{i_{m}}}.\]  
Further by proceeding recursively using SES, Lemma \ref{lemma 2.4}, and  Lemma \ref{lemma 2.3} we conclude that the weights of the $B$-module $H^0(w,\alpha)$ are $0,$ $-\alpha$, or among the negative roots other than negatives of simple roots. Moreover the multiplicity of each weight is one.

Proof of (3): Assume that $1\le m\le r-2$ is the largest integer such that $\alpha_{i_{m}}=\alpha.$ Since $w=s_{i_{1}}\cdots s_{i_{r}}$ is a reduced expression, there is an integer $m+1\le t\le r-1$ such that $\langle \alpha, \alpha_{i_{t}} \rangle=-1.$
Therefore by  (2) the weights of the $B$-module $H^0(s_{i_{m+1}}\cdots s_{i_{r}},\alpha)$ are $0,$ $-\alpha,$ or among the negative roots other than negatives of simple roots with the multiplicity of each weight is one. Then by using Lemma \ref{lemma 2.4} the indecomposable $\widetilde{B}_{\alpha_{i_{m}}}$-summands of $H^0(s_{i_{m+1}}\cdots s_{i_{r}},\alpha)$ are either of the following forms
\[\mathbb{C}_{0}\oplus \mathbb{C}_{-\alpha}=V(1)\otimes \mathbb{C}_{-\omega_{i_{m}}};\] \[\mathbb{C}_{-\beta}\oplus \mathbb{C}_{-\beta-\alpha}=V(1)~\text{for}~\langle -\beta , \alpha\rangle=1;\] \[\mathbb{C}_{-\beta}=V(0)\otimes \mathbb{C}_{\omega_{i_{m}}}~\text{for}~\langle -\beta , \alpha\rangle=1;\]
\[\mathbb{C}_{-\beta}=V(0)\otimes \mathbb{C}_{-\omega_{i_{m}}}~\text{for}~\langle -\beta , \alpha\rangle=-1;\] \[\mathbb{C}_{-\beta}=V(0)~\text{for}~\langle -\beta , \alpha\rangle=0;\]
where $V(i)$ denotes an $i+1$-dimensional irreducible $\widetilde{L}_{\alpha_{i_{m}}}$-module.
Therefore by using SES and Lemma \ref{lemma 2.3} we conclude that the weights of the $B$-module $H^0(s_{i_{m}}\cdots s_{i_{r}},\alpha)$ are among the negative roots other than negatives of simple roots with multiplicity one. Further by proceeding recursively using SES, Lemma \ref{lemma 2.4}, and  Lemma \ref{lemma 2.3} we conclude that the weights of the $B$-module $H^0(w,\alpha)$ are among the negative roots other than negatives of simple roots with multiplicity one.
\end{proof}

\begin{lemma}\label{lemma 4.11}
Assume that the rank of $G$ is at least two. Let $w=s_{i_1}s_{i_2}\cdots s_{i_{r}}$ be a reduced expression and $\alpha_{i_{r}}=\alpha.$ Then we have $H^j(G/B, H^0(w,\alpha))=0$ for all $j\ge 0$.
\end{lemma}
\begin{proof}
The weights of the $B$-module $H^0(w,\alpha)$ are either of the three mutually exclusive cases as in Lemma \ref{lemma 4.10}. We prove the lemma by case by case.
\vspace{.2cm}
	
Case 1: Assume that the weights of the $B$-module $H^0(w,\alpha)$ are $\alpha,$ $0,$ and $-\alpha.$
	
Since the rank of $G$ is at least two, there is a $\beta$ in $S$ such that $\langle \alpha, \beta \rangle=-1.$ Consider the natural projection map \[p: G/B \longrightarrow G/P_{\{\alpha,\beta\}}.\]  Let $v=s_{\alpha}s_{\beta}s_{\alpha}.$  Then note that $P_{\{\alpha, \beta\}}/B=X(v).$
By using Lemma \ref{lemma 2.4} \[H^0(w,\alpha)=V(2),\] where $V(2)$ denotes a three dimensional irreducible $\widetilde{L}_{\alpha}$-module. Thus by Lemma \ref{lemma 2.3} we have  \[H^0(s_{\alpha}, H^0(w,\alpha))=\mathbb{C}_{\alpha}\oplus\mathbb{C}_{0}\oplus \mathbb{C}_{-\alpha};\] \[H^1(s_{\alpha}, H^0(w,\alpha))=0.\] 
Since $\langle \alpha, \beta\rangle=-1,$ by using Lemma \ref{lemma 2.4} the indecomposable $\widetilde{B}_{\beta}$-summands of $H^0(s_{\alpha},H^0(w,\alpha))$ are one of the following forms: \[\mathbb{C}_{\alpha}=V(0)\otimes\mathbb{C}_{-\omega_{\beta}};\] \[\mathbb{C}_{0}=V(0);\] \[\mathbb{C}_{-\alpha}=V(0)\otimes\mathbb{C}_{\omega_{\beta}}.\]  By using Lemma \ref{lemma 2.3} we have \[H^0(s_{\beta}, H^0(s_{\alpha}, H^0(w,\alpha)))=\mathbb{C}_{0}\oplus \mathbb{C}_{-\alpha}\oplus \mathbb{C}_{-\alpha-\beta}\]\[H^1(s_{\beta},H^0(s_{\alpha} ,H^0(w,\alpha)))=0.\] Therefore by using SES we have \[ H^0(s_{\beta}s_{\alpha}, H^0(w,\alpha))=\mathbb{C}_{0}\oplus \mathbb{C}_{-\alpha}\oplus \mathbb{C}_{-\alpha-\beta}\]\[ H^1(s_{\beta}s_{\alpha} ,H^0(w,\alpha))=0;\]  further by using SES recursively it follows that  \[H^j(s_{\beta}s_{\alpha} ,H^0(w,\alpha))=0~~(j\ge 2).\]
By using Lemma \ref{lemma 2.4} the indecomposable $\widetilde{B}_{\alpha}$-summands of $H^0(s_{\beta}s_{\alpha},H^0(w,\alpha))$ are the following
\[\mathbb{C}_{0}\oplus \mathbb{C}_{-\alpha}=V(1)\otimes \mathbb{C}_{-\omega_{\alpha}};\space \mathbb{C}_{-\alpha-\beta}=V(0)\otimes\mathbb{C}_{-\omega_{\alpha}}.\]
Therefore by using Lemma \ref{lemma 2.3} we have \[H^0(s_{\alpha}, H^0(s_{\beta}s_{\alpha},H^0(w,\alpha)))=0\] \[H^1(s_{\alpha}, H^0(s_{\beta}s_{\alpha},H^0(w,\alpha)))=0.\] By using SES we conclude that \[H^0(s_{\alpha}s_{\beta}s_{\alpha} ,H^0(w,\alpha))=0\] \[H^1(s_{\alpha}s_{\beta}s_{\alpha} ,H^0(w,\alpha))=0.\] Further, since  $H^j(s_{\beta}s_{\alpha} ,H^0(w,\alpha))=0$ for all $j\ge 2,$ by using SES we conclude that $H^j(s_{\alpha}s_{\beta}s_{\alpha} ,H^0(w,\alpha))=0$ for all $j\ge 2.$ In other words we have \[H^0(P_{\{\alpha,\beta\}}/B, H^0(w,\alpha))=0\]\[ H^j(P_{\{\alpha,\beta\}}/B, H^0(w,\alpha))=0 ~(j\ge 1).\]
Thus $R^{j}p_{*}\mathcal{L}(H^0(w,\alpha))=0$ for all $j\ge 1.$ Therefore by an application of a degenerate case of the Leray spectral sequence we have \[H^j(G/B, H^0(w,\alpha))=H^j(G/P_{\{\alpha,\beta\}}, H^0(P_{\{\alpha,\beta\}}/B,H^0(w,\alpha)))~\text{for ~all}~ j\ge 0.\]
Since $H^0(P_{\{\alpha,\beta\}}/B,H^0(w,\alpha))=0,$ it follows that $H^j(G/B, H^0(w,\alpha))=0$ for all $j\ge 0.$
\vspace{.2cm}
	
Case 2: Assume that the weights of the $B$-module $H^0(w,\alpha)$ are $0,$ $-\alpha,$ or among the negative roots other than negatives of simple roots with the multiplicity of each weight is one.
	
Consider the natural projection map \[p: G/B \longrightarrow G/P_{\alpha}.\] 
By using Lemma \ref{lemma 2.4} the indecomposable $\widetilde{B}_{\alpha}$-summands of $H^0(w,\alpha)$ are one of the following forms: \[\mathbb{C}_{0}\oplus \mathbb{C}_{-\alpha}=V(1)\otimes \mathbb{C}_{-\omega_{\alpha}};\]
\[\mathbb{C}_{-\beta}\oplus \mathbb{C}_{-\beta-\alpha}=V(1);\]
\[\mathbb{C}_{-\beta}=V(0)\otimes \mathbb{C}_{\omega_{\alpha}}~ \text{for }~ \langle -\beta, \alpha\rangle=1;\]
\[\mathbb{C}_{-\beta}=V(0)\otimes \mathbb{C}_{-\omega_{\alpha}} ~ \text{for }~ \langle -\beta, \alpha\rangle=-1;\]  \[\mathbb{C}_{-\beta}=V(0)~\text{for}~\langle -\beta , \alpha\rangle=0;\] where $V(i)$ denotes an $i+1$-dimensional irreducible $\widetilde{L}_{\alpha}$-module.
Therefore by Lemma \ref{lemma 2.3} we conclude that the weights of the $B$-module $H^0(s_{\alpha}, H^0(w,\alpha))$ are among the negative roots other than negatives of simple roots with multiplicity one and $H^1(s_{\alpha}, H^0(w,\alpha))=0.$  
Thus $R^{j}p_{*}\mathcal{L}(H^0(w,\alpha))=0$ for all $j\ge 1.$ Therefore by an application of a degenerate case of the Leray spectral sequence we have \[H^j(G/B, H^0(w,\alpha))=H^j(G/P_{\alpha}, H^0(P_{\alpha}/B,H^0(w,\alpha)))~\text{for ~all}~ j\ge 0.\]
Since $H^0(P_{\alpha}/B, H^0(w,\alpha))$ is a $B$-module such that the weights are among the negative roots other than negatives of simple roots, by using Fact 1, Fact 2, and Lemma \ref{lem 4.4} we have $H^j(G/P_{\alpha}, H^0(P_{\alpha}/B, H^0(w,\alpha)))=0$ for all $j\ge 0.$
\vspace{.2cm}
		
Case 3: Assume that the weights of the $B$-module $H^0(w,\alpha)$ are among the negative roots other than negatives of simple roots with multiplicity one.
Then by using Fact 1, Fact 2, and Lemma \ref{lem 4.4} we conclude that $H^j(G/B, H^0(w,\alpha))= 0 $ for all $j\ge 0.$
\end{proof}

Let $w=s_{i_{1}}s_{i_{2}}\cdots s_{i_{r-1}}s_{i_{r}}$ be a reduced expression and $\underline{i}=(i_{1},i_{2}\ldots,i_{r-1},i_{r})$.
\begin{lemma}\label{lemma 4.8}
Assume that the rank of $G$ is at least two. Then we have \[H^j(G/B, H^0(Z(w,\underline{i}), \Theta_{Z(w,\underline{i})}))=0 ~\text{for}~ j\ge0.\]
\end{lemma}
\begin{proof}
By Lemma \ref{lemma 4.6}, we have the following short exact sequence
\[0\to H^0(w,\alpha)\to H^0(Z(w,\underline{i}), \Theta_{Z(w,\underline{i})})\to H^0(Z(v,\underline{i}'), \Theta_{Z(v,\underline{i}')})\to 0 \]
of $B$-modules, where $\alpha=\alpha_{i_{r}},$ $v=s_{i_{1}}s_{i_{2}}\cdots s_{i_{r-1}},$ and $\underline{i}'=(i_{1},i_{2}\ldots,i_{r-1})$.

Therefore we have the following long exact sequence \[0\to H^0(G/B,H^0(w,\alpha))\to H^0(G/B,H^0(Z(w,\underline{i}), \Theta_{Z(w,\underline{i})}))\to H^0(G/B, H^0(Z(v,\underline{i}'), \Theta_{Z(v,\underline{i}')}))\]
\[\to H^1(G/B,H^0(w,\alpha))\to H^1(G/B, H^0(Z(w,\underline{i}), \Theta_{Z(w,\underline{i})}))\to \cdots \] of $G$-modules. By using Lemma \ref{lemma 4.11} we have \[H^j(G/B,H^0(Z(w,\underline{i}), \Theta_{Z(w,\underline{i})}))\simeq H^j(G/B, H^0(Z(v,\underline{i}'), \Theta_{Z(v,\underline{i}')})) ~\text{ for all }~j\ge 0.\] Hence by using induction on the length of the sequence we have $H^j(G/B,H^0(Z(w,\underline{i}), \Theta_{Z(w,\underline{i})}))\simeq H^j(G/B, H^0(s_{i_{1}},\alpha_{i_{1}}))$ for all $j\ge 0$. Therefore  by using Lemma \ref{lemma 4.11} we conclude the proof.
\end{proof}
	
	\begin{proposition}\label{prop 4.9}
		Assume that the rank of $G$ is at least two. Then we have the following:
	\begin{itemize}
			\item [(i)] ${\rm Aut}^0(\mathcal{Z}(w,\underline{i}))= G.$
			
			\item [(ii)] $H^j(\mathcal{Z}(w, \underline{i}), \Theta_{\mathcal{Z}(w,\underline{i})})= 0$ for $j\ge 1.$
	\end{itemize} 
	\end{proposition}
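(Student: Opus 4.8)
The plan is to obtain both parts by specializing Theorem~\ref{thm 3.3} to the fibration $F = Z(w,\underline{i})$, $E = \mathcal{Z}(w,\underline{i}) = G\times_{B} Z(w,\underline{i})$, feeding in the cohomological input supplied by Lemma~\ref{lemma 4.6} and Lemma~\ref{lemma 4.8}. Part (i) is then immediate: the $j=0$ case of Lemma~\ref{lemma 4.8} gives $H^0(G/B, H^0(Z(w,\underline{i}), \Theta_{Z(w,\underline{i})})) = 0$, which is exactly the hypothesis of Theorem~\ref{thm 3.3}(i), so $Aut^0(\mathcal{Z}(w,\underline{i})) = G$.

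For part (ii) I would rerun the argument of Theorem~\ref{thm 3.3}(ii) but push the vanishing to all degrees $j\ge 1$. Write $\pi : \mathcal{Z}(w,\underline{i}) \to G/B$ for the projection $[g,z]\mapsto gB$, with fiber $Z(w,\underline{i})$, and consider the relative tangent sheaf sequence
\begin{equation*}
0 \longrightarrow \mathcal{R} \longrightarrow \Theta_{\mathcal{Z}(w,\underline{i})} \longrightarrow \pi^{*}\Theta_{G/B} \longrightarrow 0 .
\end{equation*}
First I would record that BSDH-varieties satisfy $H^j(Z(w,\underline{i}), \mathcal{O}_{Z(w,\underline{i})}) = 0$ for all $j \ge 1$; this follows from the iterated $\mathbb{P}^1$-fibration structure $f_r$ together with $H^{\ge 1}(\mathbb{P}^1, \mathcal{O}) = 0$ and induction (equivalently, from $R\phi_{w,*}\mathcal{O}_{Z(w,\underline{i})} = \mathcal{O}_{X(w)}$ and the rationality of Schubert varieties). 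As in the proof of Theorem~\ref{thm 3.3}(ii), this forces $R^j\pi_*(\pi^{*}\mathcal{O}_{G/B}) = 0$ for $j\ge 1$, whence by the projection formula and the degenerate Leray spectral sequence $H^j(\mathcal{Z}(w,\underline{i}), \pi^{*}\Theta_{G/B}) = H^j(G/B, \Theta_{G/B})$, which vanishes for $j\ge 1$ by Demazure.

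Next I would compute $H^j(\mathcal{Z}(w,\underline{i}), \mathcal{R})$. Since $\mathcal{R}$ restricts on each fiber to $\Theta_{Z(w,\underline{i})}$, the direct image $R^j\pi_*\mathcal{R}$ is the homogeneous bundle attached to the $B$-module $H^j(Z(w,\underline{i}), \Theta_{Z(w,\underline{i})})$; by Lemma~\ref{lemma 4.6} this vanishes for $j\ge 1$, so the Leray spectral sequence for $\pi$ and $\mathcal{R}$ degenerates and yields
\begin{equation*}
H^j(\mathcal{Z}(w,\underline{i}), \mathcal{R}) = H^j\bigl(G/B, H^0(Z(w,\underline{i}), \Theta_{Z(w,\underline{i})})\bigr) = 0 \quad\text{for all } j\ge 0,
\end{equation*}
the last equality being Lemma~\ref{lemma 4.8}. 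Feeding $H^j(\mathcal{Z}(w,\underline{i}),\mathcal{R}) = 0$ into the long exact sequence of the displayed relative tangent sequence makes the restriction map $H^j(\mathcal{Z}(w,\underline{i}),\Theta_{\mathcal{Z}(w,\underline{i})}) \to H^j(\mathcal{Z}(w,\underline{i}),\pi^{*}\Theta_{G/B})$ an isomorphism in every degree; combining with the previous paragraph gives $H^j(\mathcal{Z}(w,\underline{i}), \Theta_{\mathcal{Z}(w,\underline{i})}) = 0$ for $j\ge 1$, as desired.

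On difficulty: the genuinely hard step, namely the vanishing of $H^j(G/B, H^0(Z(w,\underline{i}), \Theta_{Z(w,\underline{i})}))$ in \emph{all} degrees, is already packaged in Lemma~\ref{lemma 4.8}, whose proof carries the representation-theoretic computation; the simply-laced hypothesis enters precisely there, through Fact 2 and the control of the indecomposable $B_\alpha$-summands of the kernel $\mathcal{K}_{w,\underline{i}}$. The residual obstacle for the present proof is therefore comparatively mild: it is the base-change identification of $R^j\pi_*\mathcal{R}$ with the homogeneous bundle on $H^j(Z(w,\underline{i}), \Theta_{Z(w,\underline{i})})$, together with the spectral-sequence bookkeeping, plus the routine input $H^{\ge 1}(Z(w,\underline{i}), \mathcal{O}_{Z(w,\underline{i})}) = 0$. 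I would double-check flatness and properness of $\pi$ so that cohomology and base change legitimately identifies the higher direct images with fiberwise cohomology.
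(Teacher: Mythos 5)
Your proposal is correct and follows essentially the same route as the paper: part (i) is exactly Lemma \ref{lemma 4.8} (the $j=0$ case) fed into Theorem \ref{thm 3.3}(i), and part (ii) combines the relative tangent sequence for $\pi$, the degenerate Leray spectral sequence together with Lemmas \ref{lemma 4.6} and \ref{lemma 4.8}, and Demazure's vanishing for $\Theta_{G/B}$, just as the paper does. Your only (welcome) refinements are that you verify explicitly the input $H^j(Z(w,\underline{i}),\mathcal{O}_{Z(w,\underline{i})})=0$ for $j\ge 1$, which the paper uses implicitly when it invokes the proof of Theorem \ref{thm 3.3}(ii), and that you deduce $H^j(\mathcal{Z}(w,\underline{i}),\mathcal{R})=0$ in all degrees $j\ge 0$ and read off the isomorphisms $H^j(\mathcal{Z}(w,\underline{i}),\Theta_{\mathcal{Z}(w,\underline{i})})\simeq H^j(\mathcal{Z}(w,\underline{i}),\pi^{*}\Theta_{G/B})$ directly from the long exact sequence, rather than citing the intermediate identification $H^j(\Theta_{\mathcal{Z}(w,\underline{i})})\simeq H^j(\mathcal{R})$ ($j\ge 1$) from inside that proof.
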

	\begin{proof}
		Proof of (i): By Lemma \ref{lemma 4.8}, we have $H^0(G/B, H^0(Z(w, \underline{i}), \Theta_{Z(w,\underline{i})})) = 0.$ Therefore, by Theorem \ref{thm 3.3}(i), we have ${\rm Aut}^0(\mathcal{Z}(w,\underline{i}))= G.$
		
		Proof of (ii): We argue as in the proof of Theorem \ref{thm 3.3}(ii).
		
		Recall that $\mathcal{Z}(w,\underline{i})=G\times^{B} Z(w,\underline{i}).$ Consider the natural first component projection \[\pi:\mathcal{Z}(w,\underline{i}) \to
		G/B.\] Therefore we have the following exact sequence \[0\rightarrow \mathcal{R}\rightarrow \Theta_{\mathcal{Z}(w,\underline{i})} \rightarrow {\pi}^{*}\Theta_{G/B} \rightarrow 0\]
		of vector bundles on $\mathcal{Z}(w,\underline{i}),$ where $\mathcal{R}$ denotes the relative tangent bundle on $\mathcal{Z}(w,\underline{i})$ with respect to $\pi.$ Since $H^j(G/B, \Theta_{G/B})$ and $H^{j}(Z(w,\underline{i}), \mathcal{O}_{Z(w,\underline{i})})$ vanish for $j\ge 1$ (see \cite{Dem aut},\cite[Theorem 2, p.75 and Theorem 1, p.130]{Akh}), by the Leray spectral sequence for $\pi$ and the projection formula, $H^j(\mathcal{Z}(w,\underline{i}),\pi^{*}\Theta_{G/B})$ also vanishes for $j\ge 1$. Moreover by using Proposition \ref{prop 3.1} and the long exact sequence associated to the above short exact sequence we have the following: 
		\[0\to H^0(\mathcal{Z}(w,\underline{i}),\mathcal{R})\to H^0(\mathcal{Z}(w,\underline{i}),\Theta_{\mathcal{Z}(w,i)})\to H^0(\mathcal{Z}(w,\underline{i}), \pi^{*}\Theta_{G/B})=\mathfrak{g}\to 0\]
		\[ H^j(\mathcal{Z}(w,\underline{i}),\mathcal{R})\simeq H^j(\mathcal{Z}(w,\underline{i}),\Theta_{\mathcal{Z}(w,i)})~ \text{for all}~ j\ge 1.\]

		Since $\mathcal{R}$ is the relative tangent bundle on $\mathcal{Z}(w,\underline{i}),$ the restriction of $\mathcal{R}$ to $Z(w,\underline{i})$ is $\Theta_{Z(w,\underline{i})}$. By Lemma \ref{lemma 4.6}, we have \[H^j(Z(w, \underline{i}), \mathcal{R}|_{Z(w,\underline{i})}) = H^j(Z(w, \underline{i}), \Theta_{Z(w,\underline{i})}) = 0 ~\text{ for}~ j \ge 1.\] Hence we have \[\pi_{*}\mathcal{R}=\mathcal{L}(H^0(Z(w,\underline{i}),\Theta_{Z(w,\underline{i})}))\]\[R^j\pi_{*}\mathcal{R}=0 ~\text{for}~ j\ge 1.\] Therefore, by using a degenerate case of the Leray spectral sequence we obtain \[H^j(\mathcal{Z}(w,\underline{i}),\mathcal{R})=H^j(G/B, H^0(Z(w,\underline{i}), \Theta_{Z(w,\underline{i})})).\] Hence by using Lemma \ref{lemma 4.8}, we conclude the proof.
	\end{proof}
	
	\begin{corollary}
		Assume that the rank of $G$ is at least two and $w\neq id.$ Then $\mathcal{Z}(w, \underline{i})$ is not isomorphic to $G/B \times Z(w, \underline{i}).$
	\end{corollary}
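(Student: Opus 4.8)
The plan is to argue by contradiction, in exact parallel with the corollary for $\mathcal{X}(w)$ proved just above. Suppose that $\mathcal{Z}(w,\underline{i}) \simeq G/B \times Z(w,\underline{i})$. Then by \cite[Corollary 2.3, p.46]{Bri} the connected automorphism group of a product of projective varieties splits as a product, so
\begin{equation*}
	Aut^0(\mathcal{Z}(w,\underline{i})) = Aut^0(G/B) \times Aut^0(Z(w,\underline{i})).
\end{equation*}
Since $Aut^0(G/B) = G$ by \cite{Dem aut}, this becomes $Aut^0(\mathcal{Z}(w,\underline{i})) = G \times Aut^0(Z(w,\underline{i}))$. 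Comparing with Proposition \ref{prop 4.9}(i), which asserts $Aut^0(\mathcal{Z}(w,\underline{i})) = G$, I would conclude (by comparing dimensions, or Lie algebras via \cite[Lemma 3.4, p.13]{MO}) that the factor $Aut^0(Z(w,\underline{i}))$ must be the trivial group, equivalently $H^0(Z(w,\underline{i}), \Theta_{Z(w,\underline{i})}) = 0$.

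The substance of the proof is therefore to exhibit nontrivial automorphisms of $Z(w,\underline{i})$ whenever $w \neq id$, which contradicts the conclusion above. Since $w \neq id$ we have $\ell(w) = r \ge 1$, so $Z(w,\underline{i})$ is a positive-dimensional smooth projective variety. The cleanest route I would take is to use the natural left action of the minimal parabolic $P_{\alpha_{i_1}}$ on the first factor: the assignment $p \cdot [p_1, p_2, \ldots, p_r] = [p\,p_1, p_2, \ldots, p_r]$ is well defined on $Z(w,\underline{i})$ since the defining $B$-relation only alters $p_1$ on the right, and it is compatible with the first projection $Z(w,\underline{i}) \to P_{\alpha_{i_1}}/B \cong \mathbb{P}^1$. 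Because $P_{\alpha_{i_1}}$ acts transitively (hence nontrivially) on $P_{\alpha_{i_1}}/B$, the induced homomorphism $P_{\alpha_{i_1}} \to Aut^0(Z(w,\underline{i}))$ has positive-dimensional image, so $Aut^0(Z(w,\underline{i})) \neq \{e\}$. Alternatively one may invoke Lemma \ref{lemma 4.6} together with \cite[Theorem 7.1(4), p.690]{CKP}, where $H^0(Z(w,\underline{i}), \Theta_{Z(w,\underline{i})})$ is identified explicitly as a nonzero quotient of $\mathfrak{p}_{J(w,\underline{i})}$ for $w \neq id$; this plays here the role that \cite[Theorem 4.2(2), p.772]{Kan} played in the Schubert case.

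The main obstacle is precisely this nonvanishing step: a priori the quotient map $\mathfrak{p}_{J(w,\underline{i})} \to H^0(Z(w,\underline{i}), \Theta_{Z(w,\underline{i})})$ of Lemma \ref{lemma 4.6} could be zero, i.e. all of $\mathfrak{p}_{J(w,\underline{i})}$ could conceivably act trivially, and one must rule this out. I expect the fibration/parabolic-action argument to settle it uniformly, the base case $r = 1$ being $Z(s_{i_1},(i_1)) = P_{\alpha_{i_1}}/B \cong \mathbb{P}^1$ with $Aut^0(\mathbb{P}^1) = PGL_2 \neq \{e\}$. Once $Aut^0(Z(w,\underline{i}))$ is known to be nontrivial, the contradiction with Proposition \ref{prop 4.9}(i) is immediate and the corollary follows.
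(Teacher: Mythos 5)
Your proposal is correct, and its skeleton is identical to the paper's proof: argue by contradiction, use \cite[Corollary 2.3, p.46]{Bri} to split $Aut^0(G/B \times Z(w, \underline{i}))$ as $Aut^0(G/B)\times Aut^0(Z(w, \underline{i}))$, use $Aut^0(G/B)=G$ from \cite{Dem aut}, and contradict Proposition \ref{prop 4.9}(i). The only point of divergence is the step you correctly identify as the substance of the argument, namely that $Aut^0(Z(w,\underline{i}))$ is nontrivial when $w\neq id$. The paper settles this purely by citation: since $w \neq id,$ \cite[Theorem 7.1(4), p.690]{CKP} gives that $Aut^0(Z(w,\underline{i}))$ is not trivial --- which is exactly your fallback route via Lemma \ref{lemma 4.6}. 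Your primary route is instead a self-contained geometric argument: the left action $p\cdot[p_1,\ldots,p_r]=[pp_1,p_2,\ldots,p_r]$ of $P_{\alpha_{i_1}}$ is indeed well defined (the gluing relation changes $p_1$ only by right multiplication), it covers the transitive action on $P_{\alpha_{i_1}}/B\cong \mathbb{P}^1$ under the first projection, so the induced homomorphism into the automorphism group has nontrivial connected, hence positive-dimensional, image lying in $Aut^0(Z(w,\underline{i}))$. This is a valid and arguably preferable argument for this step, since it needs only the representability result of \cite{MO} and not the full computation of $H^0(Z(w,\underline{i}),\Theta_{Z(w,\underline{i})})$ in \cite{CKP}; what the citation buys in exchange is brevity and the precise identification of the vector fields as a quotient of $\mathfrak{p}_{J(w,\underline{i})}$. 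Either way the contradiction with Proposition \ref{prop 4.9}(i) is reached, so the proposal constitutes a complete proof.
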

	\begin{proof}
		If $\mathcal{Z}(w, \underline{i})=G/B \times Z(w, \underline{i}),$ then by \cite[Corollary 2.3, p.46]{Bri} we have ${\rm Aut}^0(\mathcal{Z}(w, \underline{
			i}))={\rm Aut}^0(G/B) \times {\rm Aut}^0(Z(w, \underline{i})).$ Since ${\rm Aut}^0(G/B)=G$ (see \cite{Dem aut}), we have ${\rm Aut}^0(\mathcal{Z}(w,\underline{i}))=G \times {\rm Aut}^0(Z(w, \underline{
			i})).$ Further, since $w\neq id,$ $Z(w,\underline{i})$ contains an open $B$-orbit of positive dimension, whence $B$ acts non-trivially on $Z(w,\underline{i}).$ Therefore, we conclude that ${\rm Aut}^0(Z(w,\underline{i}))$ is not a trivial group. Hence, ${\rm Aut}^0(\mathcal{Z}(w, \underline{i}))\neq G,$ which shows contradiction to Proposition \ref{prop 4.9}(i).
	\end{proof}

\begin{remark}
If the rank of $G$ is one, then for both $w=id$ or $w=s_{\alpha},$  $\mathcal{Z}(w, \underline{i})$ is isomorphic to $G/B \times Z(w, \underline{i}).$ Moreover, for $w=s_{\alpha}$, we have $\mathcal{Z}(w,\underline{i})\simeq G/B\times G/B\simeq \mathbb{P}^{1}\times \mathbb{P}^{1}$. Hence \[{\rm Aut}^0(\mathcal{Z}(w,\underline{i}))= G \times G ~\text{and}~H^j(\mathcal{Z}(w, \underline{i}), \Theta_{\mathcal{Z}(w,\underline{i})})= 0~\text{ for }~ j\ge 1.\] Further for $w=id$, we have $\mathcal{Z}(w,\underline{i})\simeq G/B\simeq \mathbb{P}^{1}$.
Therefore \[{\rm Aut}^0(\mathcal{Z}(w,\underline{i}))= G ~\text{ and} ~H^j(\mathcal{Z}(w, \underline{i}), \Theta_{\mathcal{Z}(w,\underline{i})})= 0~\text{ for }~ j\ge 1.\]
\end{remark}

	We conclude this article by giving an example which shows that if $G$ is not simply-laced, then $H^1(\mathcal{Z}(w,\underline{i}), \Theta_{\mathcal{Z}(w,\underline{i})})$ might not vanish, i.e., $\mathcal{Z}(w,\underline{i})$ is not locally rigid.
	
	\begin{example}\label{ex4.12}
		Let $G={\rm SO}(5,\mathbb{C}).$ Let $w= s_1s_2s_1,$ $\underline{i}=(1, 2, 1).$ Then $H^1(\mathcal{Z}(w, \underline{i}), \Theta_{\mathcal{Z}(w,\underline{i})})\neq 0.$
	\end{example} 
	\begin{proof}
	Consider the following $B$-equivariant fibre product diagram:	
		\[
		\begin{tikzcd}
		Z(w,\underline{i}) \arrow[r,"\phi_{w}"] \arrow[d,"f_{3}"'] &
		G/B  \arrow[d,"\pi"'] &
		\\
		Z(w_{1},\underline{i}_{1}) \arrow[r, "\pi\circ\phi_{w_1}"] &
		G/P_{\alpha_{1}}&
		\end{tikzcd}
		\]
where $w_1= s_1s_2,$  and $\underline{i}_{1}=(1, 2)$. Since $\pi$ is $P_{\alpha_{1}}/B~(\simeq\mathbb{P}^{1})$-fibration, the relative tangent bundle on $G/B$ with respect to $\pi$ is the line bundle $\mathcal{L}(\alpha_{1}).$ Therefore from the above diagram we have the following exact sequence \[0\rightarrow \mathcal{L}(w,\alpha_1)\rightarrow \Theta_{Z(w,\underline{i})} \rightarrow {f_{3}}^{*}\Theta_{Z(w_1,\underline{i}_{1})} \rightarrow 0\]
of vector bundles on $Z(w,\underline{i}).$
Note that by \cite[Corollary 6.4, p.780]{Kan} it follows that $H^2(w,\alpha_{1})=0$.
This gives rise to a long exact sequence\[0\rightarrow H^0(w,\alpha_1)\rightarrow H^0(Z(w,\underline{i}),\Theta_{Z(w,\underline{i})})\rightarrow H^0(Z(w_1,\underline{i}_1),\Theta_{Z(w_1,\underline{i}_1)})\] \[\rightarrow H^1(w,\alpha_1)\rightarrow H^1(Z(w,\underline{i}),\Theta_{Z(w,\underline{i})})\rightarrow H^1(Z(w_1,\underline{i}_1),\Theta_{Z(w_1,\underline{i}_1)})\rightarrow 0,\]
of $B$-modules.	

Note that by using Lemma \ref{lemma 2.1} or by Borel-Weil-Bott we have \[H^0(s_1,\alpha_{1})=\mathbb{C}_{\alpha_{1}}\oplus \mathbb{C}_{0}\oplus \mathbb{C}_{-\alpha_{1}} ~\text{and}~ H^1(s_{1},\alpha_{1})=0.\]

Note that the unipotent radical $R_{u}(P_{\alpha_{1}})$ of $P_{\alpha_{1}}$ acts trivially on the $B$-module $H^0(s_1,\alpha_{1})$ because it acts trivially on $P_{\alpha_{i_1}}/B.$ As an $\widetilde{B}_{\alpha_{2}}$-module, indecomposable summands of $H^0(s_{1},\alpha_1)$ are $\mathbb{C}_{\alpha_{1}}, \mathbb{C}_{-\alpha_{1}}$ and $\mathbb{C}_{0}.$ Therefore, by Lemma \ref{lemma 2.4} we have the following: \[\mathbb{C}_{\alpha_{1}}=V(0)\otimes \mathbb{C}_{\alpha_{1}}, \mathbb{C}_{0}=V(0)\otimes \mathbb{C}_{0}, ~\text{ and}~\mathbb{C}_{-\alpha_{1}}=V(0)\otimes\mathbb{C}_{-\alpha_{1}}\] where $V(0)$ denotes the trivial one-dimensional $\widetilde{L}_{\alpha_{2}}$-module. Since $\langle \alpha_{1}, \alpha_{2}\rangle=-2,$  by using Lemma \ref{lemma 2.3}  we have the following:
\[H^0(s_{2},H^0(s_{1},\alpha_{1}))=\mathbb{C}_{0}\oplus \mathbb{C}_{-\alpha_{1}}\oplus\mathbb{C}_{-\alpha_{1}-\alpha_{2}}\oplus \mathbb{C}_{-\alpha_{1}-2\alpha_{2}}~\text{ and}~ H^1(s_{2},H^0(s_{1},\alpha_{1}))=\mathbb{C}_{\alpha_{1}+\alpha_{2}} \] as $s_{2}\cdot \alpha_{1}=\alpha_{1}+\alpha_{2}.$

By using (SES) for $w=s_{2}s_{1},$ $B$-module $V=\mathbb{C}_{\alpha_{1}}$ and $j=0,1,$ we have \[ H^0(s_{2}s_{1},\alpha_{1})=H^0(s_{2},H^0(s_{1},\alpha_{1}))~\text{and} ~H^1(s_{2}s_{1},\alpha_{1})=H^1(s_{2}, H^0(s_{1},\alpha_{1})).\] 

Therefore we have 
\[ H^0(s_{2}s_{1},\alpha_{1})= \mathbb{C}_{0}\oplus \mathbb{C}_{-\alpha_{1}}\oplus\mathbb{C}_{-\alpha_{1}-\alpha_{2}}\oplus \mathbb{C}_{-\alpha_{1}-2\alpha_{2}} ~\text{and} ~H^1(s_{2}s_{1},\alpha_{1})=\mathbb{C}_{\alpha_{1}+\alpha_{2}}.\] 			
Note that as an $\widetilde{B}_{\alpha_{1}}$-module the indecomposable summands of  $H^0(s_{2}s_{1},\alpha_{1})$ are the following \[\mathbb{C}_{0}\oplus \mathbb{C}_{-\alpha_{1}}, \mathbb{C}_{-\alpha_{1}-\alpha_{2}},~\text{and}~ \mathbb{C}_{-\alpha_{1}-2\alpha_{2}}.\]
Moreover by Lemma \ref{lemma 2.4}
\[\mathbb{C}_{0}\oplus \mathbb{C}_{-\alpha_{1}}=V(1)\otimes \mathbb{C}_{-\omega_{1}}, \mathbb{C}_{-\alpha_{1}-\alpha_{2}}=V(0)\otimes \mathbb{C}_{-\omega_1}, ~\text{ and}~\mathbb{C}_{-\alpha_{1}-2\alpha_{2}}=V(0)\otimes\mathbb{C}_{-\alpha_{1-2\alpha_{2}}}\] where $V(i)$ denotes an $i+1$-dimensional irreducible $\widetilde{L}_{\alpha_{1}}$-module. Therefore by using Lemma \ref{lemma 2.3}, we have the following \[H^0(s_1, H^{0}(s_2s_1,\alpha_{1}))=\mathbb{C}_{-\alpha_{1}-2\alpha_{2}}~ \text{and}~ H^1(s_1, H^0(s_2s_1,\alpha_{1}))=0\]

Similarly, since $\langle \alpha_{1}+\alpha_{2},\alpha_{1}\rangle=-1,$ by using Lemma \ref{lemma 2.3} we have \[H^0(s_1, H^1(s_2s_1,\alpha_{1}))=\mathbb{C}_{\alpha_{1}+\alpha_{2}}\oplus \mathbb{C}_{\alpha_{2}}.\]

Therefore by using (SES) we have the following \[H^0(w,\alpha_{1})=\mathbb{C}_{-\alpha_{1}-2\alpha_{2}} ~\text{and}~ H^1(w, \alpha_{1})=\mathbb{C}_{\alpha_{1}+\alpha_{2}}\oplus \mathbb{C}_{\alpha_{2}}.\]

Recall the following short exact sequence \[0\rightarrow \mathcal{L}(w_1,\alpha_2)\rightarrow \Theta_{Z(w_1,\underline{i}_{1})} \rightarrow {f_{2}}^{*}\Theta_{Z(w_2,\underline{i}_{2})} \rightarrow 0\]
of vector bundles on $Z(w_1,\underline{i}_{1}),$ where $w_2=s_1$ and $\underline{i}_{2}=(1).$

Note that by \cite[Corollary 6.4, p.780]{Kan} it follows that $H^2(w_1,\alpha_{2})=0$.
This gives rise to a long exact sequence\[0\rightarrow H^0(w_1,\alpha_2)\rightarrow H^0(Z(w_1,\underline{i}_1),\Theta_{Z(w_1,\underline{i}_1)})\rightarrow H^0(Z(w_2,\underline{i}_2),\Theta_{Z(w_2,\underline{i}_2)})\] \[\rightarrow H^1(w_1,\alpha_2)\rightarrow H^1(Z(w_1,\underline{i}_1),\Theta_{Z(w_1,\underline{i}_1)})\rightarrow H^1(Z(w_2,\underline{i}_2),\Theta_{Z(w_2,\underline{i}_2)})\rightarrow 0,\]
of $B$-modules.	

Note that $Z(w_2,\underline{i}_{2})\simeq P_{\alpha_{1}}/B.$ Therefore we have \[H^0(Z(w_2,\underline{i}_2),\Theta_{Z(w_2,\underline{i}_2)})=H^0(s_{1},\alpha_1)=\mathbb{C}_{\alpha_{1}}\oplus \mathbb{C}_{0}\oplus \mathbb{C}_{-\alpha_{1}}\] and \[H^1(Z(w_2,\underline{i}_2),\Theta_{Z(w_2,\underline{i}_2)})=H^1(s_{1},\alpha_1)=0.\] Since $\alpha_{2}$ is short root, by \cite[Corollary 5.6, p. 778]{Kan} it follows that $H^1(w_1,\alpha_{2})=0$.
Therefore from the above discussion we have the following 
\[H^1(Z(w_1, \underline{i}_{1}), \Theta_{Z(w_1,\underline{i}_{1})}) = 0.\]

By using (SES) we have $H^0(w_{1},\alpha_{2})=\mathbb{C}_{0}\oplus\mathbb{C}_{-\alpha_{2}}\oplus\mathbb{C}_{-\alpha_{1}-\alpha_{2}}$. Therefore from the above long exact sequence corresponding to $Z(w_1,\underline{i}_1)$, we have
\[H^0(Z(w_1, \underline{i}_{1}), \Theta_{Z(w_1,\underline{i}_{1})})_{\mu} = 0\] for $\mu = \alpha_1+\alpha_2,\alpha_2.$ Thus from the above long exact sequence corresponding to $Z(w,\underline{i})$, we have the following short exact sequence 
\[	0\to H^1(w, \alpha_{1})\to H^1(Z(w, \underline{i}), \Theta_{Z(w,\underline{i})})\to H^1(Z(w_1,\underline{i}_{1}), \Theta_{Z(w_1,\underline{i}_1)})\to 0\] of $B$-modules. Since $H^1(Z(w_1, \underline{i}_{1}), \Theta_{Z(w_1,\underline{i}_{1})})= 0,$ we have \[H^1(Z(w,\underline{i}), \Theta_{Z(w,\underline{i})})= H^1(w,\alpha_{1}) = \mathbb{C}_{\alpha_{1}+\alpha_{2}}\oplus \mathbb{C}_{\alpha_{2}}.\]
		
Recall that $\mathcal{Z}(w,\underline{i})=G\times^{B} Z(w,\underline{i}).$ Consider the natural first component projection \[\pi:\mathcal{Z}(w,\underline{i}) \to
G/B.\] Therefore we have the following exact sequence \[0\rightarrow \mathcal{R}\rightarrow \Theta_{\mathcal{Z}(w,\underline{i})} \rightarrow {\pi}^{*}\Theta_{G/B} \rightarrow 0\]
of vector bundles on $\mathcal{Z}(w,\underline{i}),$ where $\mathcal{R}$ denotes the relative tangent bundle on $\mathcal{Z}(w,\underline{i})$ with respect to $\pi.$ Note that $H^1(\mathcal{Z}(w,\underline{i}),\pi^{*}\Theta_{G/B})$ vanishes as $H^1(G/B, \Theta_{G/B})$ and $H^1(Z(w,\underline{i}),\mathcal{O}_{Z(w,\underline{i})})$ vanish. Therefore, we have the following exact sequence
\[0\to H^0(\mathcal{Z}(w,\underline{i}),\mathcal{R})\to H^0(\mathcal{Z}(w,\underline{i}),\Theta_{\mathcal{Z}(w,i)})\to H^0(\mathcal{Z}(w,\underline{i}), \pi^{*}\Theta_{G/B})=\mathfrak{g}\]
\[\to H^1(\mathcal{Z}(w,\underline{i}),\mathcal{R})\to H^1(\mathcal{Z}(w,\underline{i}),\Theta_{\mathcal{Z}(w,i)})\to 0\] of $G$-modules.
Hence by using note (2) (see after Corollary \ref{cor 3.4}) we have $H^1(\mathcal{Z}(w,\underline{i}),\Theta_{\mathcal{Z}(w,\underline{i})})=H^1(\mathcal{Z}(w, \underline{i}),\mathcal{R}).$ By the above computation it follows that the non-zero weights of $H^0(Z(w,\underline{i}),\Theta_{Z(w,\underline{i})})$ are roots. Since the index of a non-singular root is at most one, by using Lemma \ref{lem 4.4} we have $H^2(G/B,H^0(Z(w,\underline{i}),\Theta_{Z(w,\underline{i})}))=0$. 

Therefore to show $H^1(\mathcal{Z}(w,\underline{i}),\Theta_{\mathcal{Z}(w,\underline{i})})$ does not vanish by five term exact sequence it is sufficient to show $H^0(G/B,H^1(Z(w,\underline{i}),\Theta_{Z(w,\underline{i})}))$ does not vanish. Note that $H^1(Z(w,\underline{i}), \Theta_{Z(w,\underline{i})})=\mathbb{C}_{\alpha_{1}+\alpha_{2}}\oplus \mathbb{C}_{\alpha_{2}}$.
 In order to compute the cohomology module $H^0(G/B, \mathbb{C}_{\alpha_{1}+\alpha_{2}}\oplus \mathbb{C}_{\alpha_{2}}),$ we fix a reduced expression $w_0= s_1s_2s_1s_2.$ 
 Then by using (SES) we have  \[H^0(G/B, \mathbb{C}_{\alpha_{1}+\alpha_2}\oplus \mathbb{C}_{\alpha_2})= \mathbb{C}_{\alpha_{1}+\alpha_{2}}\oplus \mathbb{C}_{\alpha_{2}}\oplus \mathbb{C}_{0}\oplus \mathbb{C}_{-\alpha_{2}}\oplus \mathbb{C}_{-(\alpha_{1}+\alpha_{2})}=V(\omega_1),\] where $V(\omega_1)$ denotes the finite dimensional irreducible $G$-module with highest weight $\omega_1.$
\end{proof}

\end{document}